 \newtheorem{thm}{Theorem}
 \newtheorem{prop}[thm]{Proposition}
 \newtheorem{lemma}[thm]{Lemma}
 \newtheorem{cor}[thm]{Corollary}
 \theoremstyle{definition}
 \newtheorem{definition}[thm]{Definition}
 \theoremstyle{remark}
 \newtheorem{remark}[thm]{Remark}
\numberwithin{thm}{section}
\def\Spec{{\rm Spec}\,}
\def\Spf{{\rm Spf}}
\def\GL{{\rm GL}}
\def\GSp{{\rm GSp}}
\def\dom{{\rm dom}}
\def\dim{{\rm dim}\,}
\def\e{\epsilon}
\def\dl{(\!(}
\def\dr{)\!)}
\def\ll{[\![}
\def\rr{]\!]}
\def\N{{\mathcal N}}
\def\Z{{\mathbb Z}}
\def\F{{\mathbb F}}
\def\Q{{\mathbb Q}}
\def\O{{\mathcal O}}
\def\m{{\mathfrak m}}
\begin{document}

\begin{title}
{Central leaves in loop groups}
\end{title}
\author{Eva Viehmann and Han Wu}
\address{Technische Universit\"at M\"unchen\\Fakult\"at f\"ur Mathematik - M11 \\ Boltzmannstr.~3\\85748 Garching bei M\"unchen\\Germany}
\email{viehmann@ma.tum.de, wu@ma.tum.de}
\date{}
\thanks{The authors were partially supported by ERC starting grant 277889 ``Moduli spaces of local $G$-shtukas''.}

\begin{abstract}{This paper analyses the finer structure of  Newton strata in loop groups. These can be decomposed into so-called central leaves. We define them, and determine their global geometric structure. We then study the closure of central leaves, both by proving some general properties and by considering an illustrative example.}
\end{abstract}
\maketitle
\section{Introduction}\label{intro}

Let $F=\mathbb F_q\dl \e\dr $ be a local field of characteristic $p$, and let $\mathcal O_F$ be its ring of integers. Let $G$ be a connected reductive group over $\mathcal O_F$. Let $T\subset B$ be a maximal torus and a Borel subgroup of $G$. Let $L$ be the completion of the maximal unramified extension of $F$, let $\mathcal O_{L}$ be its ring of integers, and $k$ its residue field. Then $k$ is an algebraic closure of $\F_p$. Let $\sigma$ denote the Frobenius of $L$ over $F$ and also of $k$ over $\F_q$. 

Let $LG$ denote the loop group of $G$. Let $K$ be the subgroup of $LG$ with $K(R)=G(R\ll \e\rr)$. By abuse of notation we also write $K$ for $K(k)=G(\mathcal O_{L})$. We have the Cartan decomposition $G(L)=\coprod_{\mu\in X_*(T)_{\dom}}K\e^\mu K$ where the union runs over the set of cocharacters which are dominant with respect to $B$.

For $b\in G(L)$ denote by $[b]$ its $\sigma$-conjugacy class. As usual, $B(G)$ denotes the set of $\sigma$-conjugacy classes in $G(L)$. For $G=\GL_n$ the elements of $B(\GL_n)$ are classified by their Newton polygons. This classification is extended to all reductive groups by Kottwitz \cite{Kottwitz1}. The set $B(G)$ has a partial ordering $\preceq$ generalizing the natural ordering (of `lying above') on the set of Newton polygons. For a given $\mu\in X_*(T)_{\dom}$ let $B(G,\mu)$ denote the finite set of $[b]\in B(G)$ with $[b]\cap K\e^\mu K\neq \emptyset$. For $[b]\in B(G,\mu)$ let $\mathcal N_{[b],\mu}=[b]\cap K\e^{\mu} K$. It is called the Newton stratum of $[b]$ in $K\e^{\mu} K$.

The double coset $K\e^{\mu} K$ has a structure of an infinite-dimensional subscheme of the loop group of $G$, and each $\mathcal N_{[b],\mu}$ is a locally closed subscheme.
In order to study the geometry of a Newton stratum more closely, a natural tool is to decompose it into central leaves. Here the central leaf of an element $y\in \mathcal N_{[b],\mu}$ is the set $$\mathcal C_y=\{g^{-1}y\sigma(g)\mid g\in K\}.$$  In Section \ref{sec2} we show that this defines a smooth locally closed subscheme of $LG$ that is closed in $\mathcal N_{[b],\mu}$. 

This definition of central leaves is a natural group-theoretic analogue of the theory of central leaves in moduli spaces of abelian varieties considered by Oort in \cite{Oortfol}, and later also by several other people, for example Chai and Oort \cite{CO}, and Harashita \cite{Harashita}. 

Note that using the usual notion of dimension, all of these schemes (double cosets, Newton strata, and central leaves) are infinite-dimensional. However, using that they are invariant under a sufficiently small open subgroup of $K$ and that the corresponding quotient is finite-dimensional we define a notion of dimension for these schemes that is finite in all three cases, and allows to compare them. In Theorem \ref{thmdim} we compute the dimensions of central leaves $\mathcal C_y$. We show in particular that their dimension only depends on the class $[y]\in B(G)$, and neither on $\mu$, nor on the specific element within the Newton stratum. These dimensions also agree with the dimension of similarly defined central leaves in (finite-dimensional) deformation spaces of local $G$-shtukas, and of the corresponding leaves in moduli spaces of abelian varieties.

Another foundational question on the decomposition of $K\e^{\mu}K $ into central leaves is to determine the closure of a given central leaf. Central leaves are closed within their Newton stratum. However, only the basic Newton strata are closed within $K\e^{\mu} K$ (see \cite{grothconj} for a complete description of the closure in the case that $G$ is split, or Proposition \ref{thmfund} below which implies that non-basic Newton strata can never be closed). Surprisingly, the question for the closure in $K\e^{\mu} K$ of a given central leaf $\mathcal C_x\subset\mathcal N_{[b],\mu}$ seems to not have been studied before except for the very particular case explained in Proposition \ref{thmfund} below. Even in the context of central leaves in moduli spaces of abelian varieties one does not know more about these closures. Using essentially the same arguments, one can prove an analog of our results on closures also in this arithmetic context.

A first rough answer on the shape of the closure is the following.

\begin{remark} By definition the closure of a central leaf is invariant under $K$-$\sigma$-conjugation, and is thus again a union of central leaves. As every central leaf $\mathcal C$ in some $\mathcal N_{[b],\mu}$ is closed in that same Newton stratum by Corollary \ref{corclosed}, its closure consists of $\mathcal C$ and a union of central leaves in Newton strata for $[b']\prec [b]$.
\end{remark}

Lemma \ref{lem1} and Proposition \ref{thmfund} are two cases of closures of central leaves that can be easily shown or deduced from the literature. To state them we need some notation. For $\mu\in X_*(T)_{\dom}$ and $b\in G(L)$ with $[b]\in B(G,\mu)$ let $$X_{\mu}(b)=\{g\in LG/K\mid g^{-1}b\sigma(g)\in K\e^{\mu}K\}.$$ This defines a locally closed reduced subscheme of the affine Grassmannian $LG/K$ called the affine Deligne-Lusztig variety. It carries a natural action by multiplication on the left by the group $$J_b(F)=\{g\in G(L)\mid g^{-1}b\sigma(g)=b\}.$$

\begin{lemma}\label{lem1}
Assume that $[b]\in B(G,\mu)$ and $\dim X_{\mu}(b)=0$. Then $\mathcal N_{[b],\mu}$ consists of a single central leaf. In particular, the closure of this central leaf coincides with $\overline{\mathcal N_{[b],\mu}}$.
\end{lemma}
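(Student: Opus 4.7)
The plan is to reinterpret central leaves in $\mathcal N_{[b],\mu}$ as $J_b(F)$-orbits on $X_\mu(b)(k)$, and then to conclude via a dimension argument that only one such orbit exists.

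First I would set up a bijection between the set of central leaves in $\mathcal N_{[b],\mu}$ and $J_b(F)\backslash X_\mu(b)(k)$. For any $y\in\mathcal N_{[b],\mu}$ I choose $h\in G(L)$ with $y=h^{-1}b\sigma(h)$; the condition $y\in K\e^\mu K$ is exactly the condition $hK\in X_\mu(b)$, and the coset $hK$ is determined by $y$ only up to left multiplication by $J_b(F)$. A short computation shows that two such elements $y_i=h_i^{-1}b\sigma(h_i)$ for $i=1,2$ lie in the same central leaf if and only if $h_2\in J_b(F)h_1K$: the passage from $h$ to $jh$ with $j\in J_b(F)$ leaves $y$ unchanged, while the passage from $h$ to $hg$ with $g\in K$ replaces $y$ by $g^{-1}y\sigma(g)$. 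Via this bijection the lemma reduces to the claim that $J_b(F)$ acts transitively on $X_\mu(b)(k)$.

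Next I would bring in dimensions. By Theorem \ref{thmdim}, every central leaf in $\mathcal N_{[b],\mu}$ has the same dimension $d=\dim\mathcal C_y$, and the parameterization above yields a formula of the shape $\dim\mathcal N_{[b],\mu}=d+\dim X_\mu(b)$ (essentially because $\mathcal N_{[b],\mu}$ is a $J_b(F)$-quotient of a $K$-bundle over $X_\mu(b)$). Under the hypothesis $\dim X_\mu(b)=0$ this forces $\dim\mathcal C_y=\dim\mathcal N_{[b],\mu}$. Since each central leaf is closed in $\mathcal N_{[b],\mu}$ by Corollary \ref{corclosed} and the leaves disjointly cover $\mathcal N_{[b],\mu}$, each leaf is also open in $\mathcal N_{[b],\mu}$; so the Newton stratum is the disjoint union of its central leaves, and under the bijection above this decomposition is indexed by the discrete set $J_b(F)\backslash X_\mu(b)(k)$.

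It then remains to show that $J_b(F)$ acts transitively on $X_\mu(b)(k)$ when $X_\mu(b)$ is $0$-dimensional. This is the main obstacle, and it is not purely formal: it reduces to a known transitivity statement for $0$-dimensional affine Deligne-Lusztig varieties (in the minuscule basic case it is the classical fact that $J_b(F)$ acts transitively on the Rapoport-Zink basic locus, and in general it can be deduced from the fact that a $0$-dimensional $X_\mu(b)$ has a single $J_b(F)$-orbit of top-dimensional components). The final ``in particular'' assertion is then immediate, since once $\mathcal C_y=\mathcal N_{[b],\mu}$, the closures of the two sets in $K\e^\mu K$ coincide.
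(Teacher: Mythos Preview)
Your reduction in the first paragraph is correct and is exactly what the paper does: central leaves in $\mathcal N_{[b],\mu}$ correspond bijectively to $J_b(F)$-orbits on $X_\mu(b)(k)$, so the lemma is equivalent to transitivity of $J_b(F)$ on $X_\mu(b)$. The problem is everything after that.

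The excursion through Theorem~\ref{thmdim} and Corollary~\ref{corclosed} is a detour that buys nothing. Even granting that every leaf is open and closed in $\mathcal N_{[b],\mu}$, you only recover the decomposition you already had from the bijection; you have not shown the index set $J_b(F)\backslash X_\mu(b)(k)$ is a singleton. (You also invoke a dimension formula $\dim\mathcal N_{[b],\mu}=\dim\mathcal C_y+\dim X_\mu(b)$ that you do not justify, and you rely on results from Section~\ref{sec2} to prove a lemma stated and proved in Section~\ref{intro}.) You then correctly identify transitivity as ``the main obstacle'' and leave it unproved: the phrase ``a $0$-dimensional $X_\mu(b)$ has a single $J_b(F)$-orbit of top-dimensional components'' is circular, since the top-dimensional components \emph{are} the points, and the appeal to the ``classical'' basic minuscule case does not cover the general statement.

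The paper closes this gap directly and without any of the Section~\ref{sec2} machinery. Since $\dim X_\mu(b)=0$, one has $X_\mu(b)=\pi_0(X_\mu(b))$; since $X_\mu(b)$ is open (and closed) in $X_{\preceq\mu}(b)$, the map $\pi_0(X_\mu(b))\hookrightarrow \pi_0(X_{\preceq\mu}(b))$ is injective. By \cite{Niecc}, Theorem~1.2, $J_b(F)$ acts transitively on $\pi_0(X_{\preceq\mu}(b))$; as it also stabilizes the non-empty subset $\pi_0(X_\mu(b))$, the inclusion is an equality and $J_b(F)$ acts transitively on $X_\mu(b)$. That is the missing idea in your argument.
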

\begin{proof}
As $\dim X_{\mu}(b)=0$ and $X_{\mu}(b)$ is a closed subscheme of the affine Grassmannian that is locally of finite type, we have $X_{\mu}(b)=\pi_0(X_{\mu}(b))$. Let $X_{\preceq\mu}(b)=\bigcup_{\mu'\preceq\mu}X_{\mu'}(b)$. Then $X_{\mu}(b)$ is open in $X_{\preceq\mu}(b)$. As it is also a closed scheme, we have $\pi_0(X_{\mu}(b))\subset\pi_0(X_{\preceq\mu}(b))$. By \cite{Niecc}, Theorem 1.2, the group $J_b(F)$ acts transitively on $\pi_0(X_{\preceq\mu}(b))$. As it also stabilizes the non-empty subset $\pi_0(X_{\mu}(b))$, we have $\pi_0(X_{\mu}(b))=\pi_0(X_{\preceq\mu}(b))$. Thus $J_b(F)$ acts transitively on $X_{\mu}(b)$. In other words, the subset of $K\e^{\mu} K$ of elements of the form $g^{-1}b\sigma(g)$ with $gK\in X_{\mu}(b)$ is a single $K$-$\sigma$-conjugacy class. This shows the first assertion. The second is a direct consequence. 
\end{proof}
Note that the converse of this lemma also holds: If a Newton stratum consists of a single central leaf, then the dimension of the corresponding affine Deligne-Lusztig variety is $0$. This lemma applies in particular to the generic Newton stratum in $K\e^{\mu}K$, also called the $\mu$-ordinary locus. For a different proof of the statement in that case compare \cite{Wortmann}, Proposition 6.9. We will also use this lemma for several specific examples in Section \ref{sec3}.

Let $I$ be the Iwahori subgroup in $K$ whose image under the projection to $G(k)$ is $B$. We have the Cartan decomposition $G(L)=\coprod_{x\in\widetilde W} IxI$ where $\widetilde W$ is the extended affine Weyl group of $G$. A fundamental alcove of a $\sigma$-conjugacy class $[b]$ is an element $x_b\in \widetilde W$ such that $Ix_bI$ is $I$-$\sigma$-conjugate to a single element, which lies in $[b]$. For further properties of fundamental alcoves compare Nie \cite{Nie}. There, Nie also proves that for minuscule $\mu$ every $[b]\in B(G,\mu)$ has a fundamental alcove that in addition is contained in $W\mu W$ where $W$ is the finite Weyl group of $G$ (see \cite{Nie}, Prop. 1.5, or \cite{trunc1}, Thm 5.6 for a general existence theorem without boundedness by $\mu$). Fundamental alcoves for a given $[b]$ are in general not unique. However, if $G$ is split, all fundamental alcoves for a given $[b]$ lie in the same $K$-$\sigma$-conjugacy class (\cite{trunc1}, Lemma 6.11, or \cite{Nie}, Theorem 1.4). 

A special case of \cite{trunc1}, Prop. 5.7 yields
\begin{prop}\label{thmfund}
Let $\mu$ be minuscule, $[b]\in B(G,\mu)$, and let $x_b$ be a fundamental alcove of $[b]$ in $W\mu W$. Let $[b']\preceq [b]$. Then there is a fundamental alcove $x_{b'}$ in $W\mu W$ such that $\mathcal C_{x_{b'}}$ is contained in the closure of $\mathcal C_{x_b}$.\qed
\end{prop}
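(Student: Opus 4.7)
The plan is to deduce the statement directly from the cited Proposition~5.7 of \cite{trunc1}, which is already formulated essentially as a closure relation among $I$-level orbits associated to fundamental alcoves. The preparatory and translation steps I would carry out are the following.

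First, I would record the combinatorial setup that the minuscule hypothesis provides. Under this hypothesis, $K\e^{\mu}K$ admits the finite decomposition $\bigsqcup_{x\in W\mu W}IxI$, and by \cite{Nie}, Prop.~1.5, every class in $B(G,\mu)$ possesses a fundamental alcove inside $W\mu W$. Thus both $x_b$ and the sought $x_{b'}$ are to be found among a finite, combinatorially controlled set of candidates. Moreover, by the very definition of a fundamental alcove, $Ix_bI$ is a single $I$-$\sigma$-conjugacy class, and its $K$-saturation under $\sigma$-conjugation is exactly $\mathcal{C}_{x_b}$; the same holds for any $x_{b'}$ produced below.

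Second, I would invoke Proposition~5.7 of \cite{trunc1} in the special case where the bound is the minuscule cocharacter $\mu$. That proposition, in the relevant formulation, takes as input a fundamental alcove $x_b\in W\mu W$ of $[b]\in B(G,\mu)$ and a class $[b']\preceq[b]$, and produces a fundamental alcove $x_{b'}\in W\mu W$ of $[b']$ together with a closure relation linking the two at the Iwahori level. The core input here is the matching between the partial order $\preceq$ on $B(G)$ and the combinatorics of fundamental alcoves inside $W\mu W$ that is worked out in \cite{trunc1}, Section~5.

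Third, I would translate the Iwahori-level closure statement into the spherical-level one by taking $K$-saturation along $\sigma$-conjugation. Since $K$-$\sigma$-conjugation is continuous and $Ix_bI$ generates $\mathcal{C}_{x_b}$ under this action, the closure of $\mathcal{C}_{x_b}$ in $K\e^{\mu}K$ contains the $K$-saturation of the closure of the $I$-$\sigma$-orbit of $x_b$, which by Step~2 contains the $I$-$\sigma$-orbit of $x_{b'}$; saturating gives $\mathcal{C}_{x_{b'}}\subset\overline{\mathcal{C}_{x_b}}$.

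The main obstacle is purely one of matching statements: one has to verify that the conclusion of \cite{trunc1}, Prop.~5.7, formulated in the truncation/Iwahori language of that paper, really yields the closure assertion in our sense, and in particular that the fundamental alcove $x_{b'}$ it produces can indeed be taken inside $W\mu W$ rather than merely in the extended affine Weyl group. This is precisely the point where the minuscule hypothesis is essential, both because it makes the decomposition of $K\e^{\mu}K$ into Iwahori double cosets indexed by $W\mu W$ clean, and because Nie's result guarantees that fundamental alcoves of every class in $B(G,\mu)$ live in this finite set. Once these compatibilities are in place, no further argument is needed beyond the invocation of Prop.~5.7.
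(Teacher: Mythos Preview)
Your proposal is correct and follows the paper's approach: the paper simply records the proposition as a special case of \cite{trunc1}, Prop.~5.7 and places a \qed after the statement with no further argument. Your elaboration of the translation from the Iwahori-level formulation to the $K$-level central leaves is sound, though in fact \cite{trunc1} already works with $K$-truncation strata, and for a fundamental alcove the truncation stratum coincides with the central leaf (as the paper itself notes later, in Section~\ref{sec3}); so your Step~3 is not strictly needed once one unpacks the statement of Prop.~5.7.
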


In Section \ref{sec3} we consider the first example of a group of type A where not all closures of central leaves for minuscule $\mu$ can be explained by the above lemma, which is the group $\GL_5$. Our main result for this example is

\begin{thm}\label{theorem_main}
Let $G=\GL_5$, $T\subset B$ the diagonal torus and upper triangular Borel, and let $\mu\in X_*(T)_{\dom}$ be minuscule. Let $\mathcal C_b$ be the central leaf of any $b\in K\e^{\mu} K$. Then $\overline{\mathcal C_b}$ contains every central leaf $\mathcal C_{x}$ where $x$ is a fundamental alcove in $W\mu W$ of some $[b']\prec [b]$.
\end{thm}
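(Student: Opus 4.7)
\emph{Proof plan.}

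The plan is to reduce the theorem to a finite number of explicit degenerations, one for each covering relation in the poset $B(\GL_5,\mu)$ and each central leaf in the corresponding Newton stratum, and then to verify them case by case. For each $[b']\in B(\GL_5,\mu)$ I fix a fundamental alcove $x_{b'}\in W\mu W$, whose existence is guaranteed by Nie \cite{Nie}. Proposition~\ref{thmfund} yields $\mathcal C_{x_{b''}}\subseteq\overline{\mathcal C_{x_{b'}}}$ whenever $[b'']\preceq[b']$. Hence it suffices to prove the following special case of the theorem: for every covering relation $[b']\prec[b]$ in $B(\GL_5,\mu)$ and every central leaf $\mathcal C\subseteq\mathcal N_{[b],\mu}$, one has $\mathcal C_{x_{b'}}\subseteq\overline{\mathcal C}$. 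A general $[b'']\prec[b]$ is then reached by choosing a cover $[c]\prec[b]$ with $[b'']\preceq[c]$: the cover-relation case gives $\mathcal C_{x_c}\subseteq\overline{\mathcal C}$, and Proposition~\ref{thmfund} yields $\mathcal C_{x_{b''}}\subseteq\overline{\mathcal C_{x_c}}\subseteq\overline{\mathcal C}$.

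To treat each such covering relation I would construct a one-parameter family $(b_t)_{t\in\mathbb{A}^1}$ in $K\e^\mu K$ whose generic fibre lies in $\mathcal C$ and whose special fibre $b_0$ lies in $\mathcal C_{x_{b'}}$. The idea is to pick a matrix representative of $\mathcal C$ in a convenient normal form, obtained by $K$-$\sigma$-conjugation combined with the Cartan decomposition, and then specialise a single matrix entry so as to drop the Newton polygon by exactly one cover step while forcing the limit into $W\mu W$. The non-trivial minuscule $\mu$ for $\GL_5$ reduce to $\mu_1=(1,0,0,0,0)$ and $\mu_2=(1,1,0,0,0)$ under the outer automorphism $\mu_r\leftrightarrow\mu_{5-r}$, and each $B(\GL_5,\mu)$ is finite, so this amounts to finitely many concrete matrix computations. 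A number of them (those strata where $\dim X_\mu(b)=0$, e.g.\ the $\mu$-ordinary one) are immediately dispatched by Lemma~\ref{lem1} together with Proposition~\ref{thmfund}.

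The principal obstacle is the identification of the $K$-$\sigma$-conjugacy class of the limit $b_0$: one must rule out that it lands in a non-fundamental-alcove leaf of $\mathcal N_{[b'],\mu}$. I would exploit the characterisation of $x_{b'}$ as an element of $W\mu W$ whose $I$-$\sigma$-conjugacy class coincides with the single Iwahori double coset $Ix_{b'}I$ (Nie \cite{Nie}); verifying this property in the limit reduces to a finite combinatorial check in the extended affine Weyl group, essentially by reading off the Iwahori double coset of $b_0$ modulo a small congruence subgroup of $K$. The dimension formula of Theorem~\ref{thmdim} serves as a useful sanity constraint, as it determines which leaves of $\mathcal N_{[b'],\mu}$ are candidates for lying in the boundary of the higher-dimensional leaf $\mathcal C$.
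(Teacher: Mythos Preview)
Your reduction strategy matches the paper's. The paper reduces to $\mu=(1,1,0,0,0)$, uses Lemma~\ref{lem1} to dispose of all Newton strata with $\dim X_\mu(b)=0$ (namely $[b_4],\dotsc,[b_8]$), and is left with precisely the two cover-relation statements $[b_2]\prec[b_3]$ and $[b_1]\prec[b_2]$ (Lemmas~\ref{lemma2} and~\ref{lemma3}), just as your use of Proposition~\ref{thmfund} and Lemma~\ref{lem1} predicts.

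The gap is in how you propose to carry out the remaining explicit degenerations. ``Specialise a single matrix entry'' of a normal-form representative of $\mathcal C$ does not produce the required family. The normal forms the paper finds for the leaves in $[b_3]$ (the matrices $x_t$ of Lemma~\ref{lemrep1}, parametrised by $t\in k$) do not leave $[b_3]$ when you set some entry to $0$; and if instead you perturb such a representative by a new parameter so as to drop the Newton polygon, there is no reason the generic fibre stays in the \emph{same} central leaf $\mathcal C_{x_t}$ rather than wandering into another leaf of $\mathcal N_{[b_3],\mu}$. What the paper actually does is more delicate: over $k'=\overline{k(\!(\pi)\!)}$ with valuation ring $R$, it solves an infinite system of $\sigma$-semilinear recursions to produce an explicit $g\in K(k')$ whose entries involve iterated $q$-power roots of $\pi$, such that the $\sigma$-conjugate of $x_t$ by $g$ has all entries in $R[\![\epsilon]\!]$ and reduces at $\pi=0$ to the fundamental-alcove representative of the smaller class. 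The bulk of the argument is tracking the $\pi$-valuations of the entries of $g$ and $g^{-1}$ to verify this integrality.

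Consequently, your identification of the ``principal obstacle'' is off. In the paper's construction the special fibre is the fundamental-alcove element \emph{by design}, so no argument is needed to rule out landing in a non-fundamental leaf of $\mathcal N_{[b'],\mu}$. The genuine difficulty is building the $R$-family at all---that is, producing the $\sigma$-conjugating element $g$ and checking integrality---not identifying the leaf of the limit.
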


This theorem shows (to our surprise, compare Remark \ref{remexpect} for a more detailed comment) that closures of central leaves seem to not give a good correspondence between points in the corresponding affine Deligne-Lusztig varieties. It is rather the case that also from the point of view of these closure relations the fundamental alcoves play a very particular role in the sense that their central leaves are contained in the closure of all central leaves for larger $\sigma$-conjugacy classes.

\section{The global structure of central leaves in loop groups}\label{sec2}

\begin{definition} Let $\mathcal{B}$ be a subscheme of the loop group $LG$.
\begin{enumerate}
\item Let $\mu\in X_*(T)_{\dom}$. Then $\mathcal B$ is \emph{bounded by $\mu$} if it is contained in the closure of  $K\epsilon^\mu K$. It is \emph{bounded} if it is contained in a finite union of double cosets $K\epsilon^\mu K$.
\item Let $K_n$ be the kernel of the projection map $K\rightarrow G(\O_F/(t^n))$. Then $\mathcal B$ is \emph{admissible} if there is an $n\in \mathbb{N}$ with $\mathcal{B}K_n=\mathcal{B}$.
\item For a bounded and admissible algebraic set with $XK_n=X$ let $$\dim X:=\dim(X/K_n) -n\cdot\dim(G).$$
\end{enumerate}
\end{definition}
\begin{remark}\label{rem221}
\begin{enumerate}
\item Let $\mathcal{B}$ be bounded. Then one easily sees that $\mathcal{B}$ is admissible iff there is an $n'\in \mathbb{N}$ with $K_{n'}\mathcal{B}=\mathcal{B}$. Here $n'$ can be given in terms of the bound for $\mathcal B$ and the $n$ in the definition of admissibility.
\item Let $\mathcal{B}$ be bounded and admissible. Then $\mathcal{B}$ is \emph{smooth, locally closed, closed, irreducible etc.}~iff for some $n$ as above the quotient $\mathcal B/K_n\subset LG/K_n$ has the corresponding property.
\item The dimension of a bounded and admissible subscheme of $LG$ is independent of the choice of $n.$ 
\item Similarly, one can define the codimension of a closed irreducible admissible subscheme $\mathcal B'$ of some bounded and admissible scheme $\mathcal B$. If $\mathcal B$ is also equidimensional, one easily sees that this codimension agrees with $\dim\mathcal B-\dim \mathcal B'$.
\end{enumerate}
\end{remark}

\begin{prop}\label{prop23}
 Let $\mathcal{B}$ be a bounded subset of $LG(k)$. Then there is a $c\in \mathbb{N}$ such that for each $d\in \mathbb{N}$, each $g\in \mathcal{B}$ and $h\in K_{d+c}(k)$ there is an $l\in K_d(k)$ with $gh=l^{-1}g\sigma^*(l).$
\end{prop}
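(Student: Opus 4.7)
The plan is to rewrite the equation $gh = l^{-1}g\sigma^*(l)$ in the form $F_g(l) = ghg^{-1}$, where $F_g(l) := l^{-1} g\sigma^*(l) g^{-1}$, and to solve it by a Hensel/Newton iteration which reduces to a linearized surjectivity statement. Since $\mathcal B$ is bounded, contained in a finite union $\bigcup_i K\e^{\mu_i}K$, there is a constant $c_0\in\mathbb N$ depending only on the $\mu_i$ (one may take $c_0$ of the order $\max_i\langle 2\rho,\mu_i\rangle$) such that for every $g\in\mathcal B$ conjugation by $g^{\pm 1}$ sends $K_{N+c_0}(k)$ into $K_N(k)$ for all $N\geq 0$, and equivalently $\mathrm{Ad}(g^{\pm 1})$ shifts the $\e$-adic valuation on $\mathfrak g(\O_L)$ by at most $c_0$. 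Consequently, for $h\in K_{d+c}(k)$ with $c\geq c_0$ the conjugate $ghg^{-1}$ lies in $K_{d+c-c_0}(k)$, while $F_g$ defines a pointed morphism $K_d\to K_{d-c_0}$ of pro-unipotent $k$-groups. The proposition thus amounts to showing $F_g(K_d(k))\supset K_{d+c-c_0}(k)$ for a suitable $c$ uniform in $g\in\mathcal B$.

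The derivative of $F_g$ at $l=1$, after identifying $\mathrm{Lie}(K_d)$ with $\e^d\mathfrak g(\O_L)$, is the $k$-linear map
$$dF_g|_1\colon \e^d\mathfrak g(\O_L)\longrightarrow \e^{d-c_0}\mathfrak g(\O_L),\qquad X\longmapsto \mathrm{Ad}(g)\sigma(X)-X.$$
Because $F_g$ is formally smooth, a standard Newton iteration reduces the problem to its linearization: it is enough to find $c_1=c_1(c_0)$ such that for every $g\in\mathcal B$ and every $Y\in\e^{d+c_1}\mathfrak g(\O_L)$ the equation $\mathrm{Ad}(g)\sigma(X)-X=Y$ admits a solution $X\in\e^d\mathfrak g(\O_L)$. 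Given linearized solvability, each Newton step replaces the current approximation by a controlled perturbation inside $K_d$ and (roughly) doubles, up to the fixed loss $c_1$, the valuation of the residual error, so the iteration converges in $K_d(k)$.

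For the linearized equation I use the Cartan decomposition $g=k_1\e^\mu k_2$ with $k_1,k_2\in K$. Since $\mathrm{Ad}(K)$ preserves every filtration piece $\e^N\mathfrak g(\O_L)$, a change of variables $X\mapsto \mathrm{Ad}(k_2^{-1})X$, $Y\mapsto \mathrm{Ad}(k_1)Y$ reduces to the torus case $g=\e^\mu$, at the price of an auxiliary error of uniformly bounded valuation loss reflecting that $\sigma$ does not commute with $\mathrm{Ad}(k_i)$. In the torus case, $\mathrm{Ad}(\e^\mu)$ acts on the root space $\mathfrak g_\alpha$ as multiplication by $\e^{\langle\alpha,\mu\rangle}$, and the equation decouples into scalar $\sigma$-difference equations
$$\e^{\langle\alpha,\mu\rangle}\sigma(X_\alpha)-X_\alpha=Y_\alpha,\qquad \alpha\in\Phi\cup\{0\},$$
each of which is solved by an explicit geometric series; the direction of convergence depends on the sign of $\langle\alpha,\mu\rangle$, while the borderline case $\langle\alpha,\mu\rangle=0$ uses instead that $\sigma-1$ is surjective on $\O_L$ because $k=\overline{\F_q}$ is algebraically closed (Artin--Schreier). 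In all three cases the solution satisfies $v(X_\alpha)\geq v(Y_\alpha)-|\langle\alpha,\mu\rangle|$, so any $c_1$ strictly larger than $\max_i\max_\alpha|\langle\alpha,\mu_i\rangle|$ works uniformly.

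The step I expect to be the most delicate is the bookkeeping when absorbing the conjugations $k_1,k_2$: they mix root spaces, so the auxiliary error introduced by the reduction to the torus case is genuinely present and must not accumulate under iteration. I would control it by an inner induction on the valuation of $Y$, solving the torus-case equation one filtration step at a time and feeding the residual error back into the same solver; each such step advances the residual by at least one power of $\e$ while losing only a bounded amount. Combined with the outer Newton iteration and the bound established in the first paragraph, this produces the required $l\in K_d(k)$ and proves the proposition.
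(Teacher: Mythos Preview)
Your linearization and Newton--iteration scaffolding are fine, and the uniformity in $g$ of the constant $c_0$ coming from boundedness is correct. The gap is in the step where you ``reduce to the torus case'' via the Cartan decomposition $g=k_1\e^\mu k_2$. Writing out your substitution, the equation $\mathrm{Ad}(g)\sigma(X)-X=Y$ becomes, after conjugating by $k_2$ (or $k_1$), an equation of the form $\mathrm{Ad}(k_3\e^\mu)\sigma(X')-X'=Y'$ with $k_3\in K$ determined by $k_1,k_2,\sigma(k_1),\sigma(k_2)$; it does \emph{not} become $\mathrm{Ad}(\e^\mu)\sigma(X')-X'=Y'$. The point is that the Cartan decomposition parametrizes $K$-double cosets, not $K$-$\sigma$-conjugacy classes, so no change of variables by $K$ can strip off both $k_1$ and $k_2$. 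Your proposed inner iteration does not rescue this: the discrepancy between $\mathrm{Ad}(g)\sigma$ and $\mathrm{Ad}(\e^\mu)\sigma$ is of size $O(1)$ in the $\e$-adic filtration (since $\mathrm{Ad}(k_i)$ preserves each $\e^N\mathfrak g(\O_L)$ but is far from the identity on it), so feeding the residual back does not gain a power of~$\e$ and the iteration does not converge.

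What is genuinely needed is a \emph{bounded $\sigma$-conjugation} of $g$ to a standard representative on which the $\sigma$-linear equation becomes tractable. Concretely, the uniform bound you are after---that $(\mathrm{Ad}(g)\sigma-1)$ maps $\e^d\mathfrak g(\O_L)$ onto $\e^{d+c_1}\mathfrak g(\O_L)$ with $c_1$ independent of $g\in\mathcal B$---amounts to controlling, uniformly in $g$, the lattice distance between $\mathfrak g(\O_L)$ and the slope-adapted lattice of the isocrystal $(\mathfrak g(L),\mathrm{Ad}(g)\sigma)$. That uniform control is precisely the content of the Rapoport--Zink finiteness theorem, which the paper invokes: one first $\sigma$-conjugates every $g$ in a Newton stratum to a fixed fundamental alcove $x$ by an element of a bounded set, and then proves the claim directly for $x$ using its $P$-fundamental structure (where conjugation by $x$ interacts well with the Iwahori factorization along $P$, so that the root-by-root argument you sketched for $\e^\mu$ actually goes through). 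In short, your torus calculation is the right endgame, but the bridge to it is $\sigma$-conjugation to a fundamental alcove, not the Cartan decomposition.
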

The proof is along the same lines as the proof of \cite{HV1} Theorem 10.1 where the same statement is shown for split groups. Thus we mainly indicate what changes one has to make to generalize to our situation. We also explain how to replace part of the proof by a reference to the main theorem of \cite{RZindag} and to the theory of fundamental alcoves. The last claim we give below is a better estimate than the corresponding claim in \cite{HV1} Lemma 10.2, due to the fact that in the meantime the theory of fundamental alcoves has been developed. One could also use essentially the same claim and proof as in \cite{HV1}.
\begin{proof}
We write $\mathcal B$ as a disjoint union of its intersections with the different $K$-double cosets and Newton strata. As by boundedness of $\mathcal B$ only finitely many of these intersections are non-empty, we may consider each of them separately and thus assume that $\mathcal B=\mathcal N_{[b],\mu}$ for some $\mu$ and $[b]\in B(G,\mu)$.

Let $x$ be a fundamental alcove for $[b]$. Then Rapoport and Zink \cite{RZindag} show that there is a bounded subset $\mathcal C$ of $LG$ such that each element of $\mathcal B$ is $\sigma$-conjugate via an element of $\mathcal C$ to (a fixed chosen representative in $LG$ of) $x$.

The same estimates as in the last paragraph of the proof of \cite{HV1} Theorem 10.1 then show that it is enough to prove the following claim: For every $d\in\mathbb N$ and $g\in I_d$ there is a $k\in I_d$ with $xg=k^{-1}x\sigma(k)$. Here $I_d$ is the subgroup of $K_d$ of elements which reduce to $1$ modulo $\e^d$ and whose reduction modulo $\epsilon^{d+1}$ is in $B$.

This claim in its turn is shown by the same arguments as the claim for $d=0$. By \cite{Nie}, Thm 1.3 every fundamental alcove is a so-called $P$-fundamental alcove for some $P$. For $P$-fundamental alcoves one can then follow the proof of \cite{GHKR2}, Proposition 6.3.1 to also prove our claim. One needs two replacements: Their statement is the claim for $d=0$, and they assume that $G$ is split, thus in our case we cannot assume $x$ or $P$ to be fixed by $\sigma$. Both lead to some obvious modifications that one has to make.  
\end{proof}

\begin{cor}
Let $b\in K\e^{\mu} K$. Then the $K$-$\sigma$-conjugacy class $\mathcal C_b$ is contained in $K\e^{\mu} K$, admissible, and a smooth and locally closed subscheme of $LG$. Further, $\N_{[b],\mu}$ is admissible.
\end{cor}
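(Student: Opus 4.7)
The plan is to handle the four claims in sequence. First, the containment $\mathcal C_b \subset K\e^\mu K$ will be immediate: since $G$ is defined over $\mathcal O_F$ we have $\sigma(K) = K$, and so for any $g \in K$ the element $g^{-1}b\sigma(g)$ lies in $K \cdot (K\e^\mu K) \cdot K = K\e^\mu K$.

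Next, I would handle the two admissibility statements uniformly via Proposition \ref{prop23}. Both $\mathcal C_b$ and $\N_{[b], \mu}$ are bounded, being contained in $K\e^\mu K$. Proposition \ref{prop23} applied with $d=0$ then yields, in each case, a constant $c$ such that for every element $g$ of the given set and every $h \in K_c$, one has $gh = l^{-1}g\sigma(l)$ for some $l \in K$. For $\mathcal C_b$: writing $g = g_0^{-1}b\sigma(g_0)$ gives $gh = (g_0 l)^{-1} b \sigma(g_0 l) \in \mathcal C_b$, so $\mathcal C_b K_c = \mathcal C_b$. For $\N_{[b], \mu}$: the element $gh = l^{-1}g\sigma(l)$ is $K$-$\sigma$-conjugate to $g$, hence lies in $[b]$; it also lies in $K\e^\mu K$ since this double coset is $K$-stable on the right, so $gh \in \N_{[b], \mu}$.

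For the smoothness and local closedness of $\mathcal C_b$, I would descend to a finite-dimensional quotient. Fix $n$ with $\mathcal C_b K_n = \mathcal C_b$. For $m$ sufficiently large, depending on $n$ and $\mu$, the morphism $\phi : K \to LG$, $g \mapsto g^{-1}b\sigma(g)$, descends to a morphism $\bar\phi : K/K_m \to K\e^\mu K/K_n$ of finite-type $k$-schemes, with image $\mathcal C_b/K_n$. A direct computation, using that $K_n \trianglelefteq K$ and $\sigma(K) = K$, identifies the fibres of $\bar\phi$ with the right cosets of the closed subgroup scheme
\[
Z^{(m,n)}_b := \{\bar g \in K/K_m \mid g^{-1}b\sigma(g) \in bK_n\}.
\]
Thus $\mathcal C_b/K_n$ is set-theoretically the quotient $(K/K_m)/Z^{(m,n)}_b$. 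As the quotient of a smooth algebraic group by a closed subgroup this is a smooth $k$-variety, and by the standard fact that orbits under algebraic group actions are locally closed subvarieties it sits as a locally closed subvariety of $K\e^\mu K/K_n$. By Remark \ref{rem221}(2) this transfers back to give the desired smoothness and local closedness of $\mathcal C_b$ in $LG$.

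The main obstacle I anticipate is checking that $Z^{(m,n)}_b$ really is a subgroup. Closure under inversion relies on the identity $\sigma(h) K_n \sigma(h)^{-1} = K_n$ for $h \in K$, itself coming from the normality of $K_n$ in $K$ and the fact that $\sigma$ preserves $K$; more subtly, the orbit argument has to be set up with some care, since $\sigma$ is a Frobenius rather than a $k$-scheme automorphism of $K$, so the ``$\sigma$-conjugation action'' is not quite an algebraic group action in the classical sense and one has to verify that the above morphism of finite-type $k$-schemes $\bar\phi$ really does encode that orbit structure.
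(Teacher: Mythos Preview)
Your argument is correct and is precisely the unpacking of the paper's two-line proof (``admissibility follows from the previous proposition; as $\mathcal C_b$ is one $K$-orbit, it is smooth and locally closed''). Regarding your anticipated obstacle: it is not a real one, since although $\sigma$ is only a Frobenius it is still a $k$-morphism, so $(g,x)\mapsto g^{-1}x\sigma(g)$ defines a genuine algebraic action of $K$ on $K\e^\mu K$; for $m$ large this descends to an action of the smooth group $K/K_m$ on the finite-type scheme $K\e^\mu K/K_n$, and the standard orbit theorem applies directly---so one need not separately verify that $Z^{(m,n)}_b$ is a subgroup (though your verification is also fine).
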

\begin{proof}
In both cases, admissibility follows from the previous proposition. As $\mathcal C_b$ is one $K$-orbit, it is smooth and locally closed.
\end{proof}
The assertion on Newton strata also follows from a corresponding assertion on Newton strata in the whole loop group by He \cite{HeKR}, Thm. A.1.

Our next goal is to compute the dimension of central leaves. We do so by relating them to analogously defined leaves in suitable deformation spaces, which can then be computed explicitly.

\begin{definition}
Let $n\in\mathbb N$ and let $b\in LG(k)$ be bounded by some $\mu\in X_*(T)_{\dom}$. We consider the following functor on the category $(Art/k)$ of Artinian local $k$-algebras with residue field $k$.
\begin{align*}
\mathscr{D}ef(b)_n:(Art/k)\to{}&(Sets),\\
  A\mapsto{}&\{\tilde b\in (\overline{K\e^{\mu}K})(A) \text{ with }\tilde b_k=b\}/_{\cong_n}.
\end{align*}
Here $\tilde b\cong_n \tilde b'$ if there exists a $g\in K_n(A)$ with $g_k=1$ such that $\tilde b'=g^{-1}\tilde b\sigma(g).$
We call $\mathscr{D}ef(b)_n$ the deformation functor of level $n$ of $b$.
\end{definition}

\begin{prop}\label{propdef}
The functor $\mathscr{D}ef(b)_n$ is pro-represented by the formal completion of $K_n\backslash\overline{K\e^{\mu} K}$ at $K_nb$, which we denote by $D_{b,n}$. 
\end{prop}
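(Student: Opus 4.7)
The plan is to construct a natural transformation from $\mathscr{D}ef(b)_n$ to the functor of points of $D_{b,n}$ on $(Art/k)$, and then verify it is an isomorphism. First I would note that by the universal property of formal completions, the functor $A\mapsto\{\tilde b\in\overline{K\e^\mu K}(A)\mid \tilde b_k=b\}$ is pro-represented by the formal completion $\widehat{\overline{K\e^\mu K}}_b$ of $\overline{K\e^\mu K}$ at $b$; note that by admissibility this completion makes sense after passing to a sufficiently deep truncation $\overline{K\e^\mu K}/K_m$. Composing any such lift with the quotient morphism $q\colon\overline{K\e^\mu K}\to K_n\backslash\overline{K\e^\mu K}$, where $K_n$ acts by $\sigma$-conjugation, yields an $A$-point of $D_{b,n}$. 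Two $\cong_n$-equivalent lifts are $K_n(A)$-$\sigma$-conjugate and hence have the same image under $q$, so this descends to a well-defined natural transformation $\Phi$.

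For surjectivity of $\Phi_A$, I would use that, after truncation by $K_m$ for $m\gg n$, the map $q$ becomes a quotient of a finite-dimensional $k$-scheme by the action of the smooth algebraic group $K_n/K_m$ and is smooth on a neighborhood of the image of $b$. Standard formal smoothness of $q$ then lifts any $f\in D_{b,n}(A)$ to some $\tilde f\in\overline{K\e^\mu K}(A)$, whose closed point lies in $K_n(k)\cdot_\sigma b$; pre-$\sigma$-conjugating by the appropriate element of $K_n(k)$ produces a lift in $\mathscr{F}(A)$ with closed-point reduction equal to $b$ and with the prescribed image under $\Phi_A$.

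For injectivity, suppose two lifts $\tilde b,\tilde b'$ with $\tilde b_k=\tilde b'_k=b$ have $\Phi_A(\tilde b)=\Phi_A(\tilde b')$. Then there exists $g\in K_n(A)$ with $\tilde b'=g^{-1}\tilde b\,\sigma(g)$; reducing modulo the maximal ideal of $A$ shows $g_k\in J_b(F)\cap K_n(k)$. I would then factor $g=g_0g_1$, with $g_0\in K_n(A)$ a $\sigma$-centralizer of $\tilde b$ reducing to $g_k$, and $g_1\in K_n(A)$ satisfying $g_{1,k}=1$; the element $g_1$ then witnesses $\tilde b\cong_n\tilde b'$. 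The existence of $g_0$ is exactly a lifting of the $\sigma$-centralizer $g_k$ of $b$ to a $\sigma$-centralizer of $\tilde b$ over the Artinian base $A$, and follows from smoothness of the $\sigma$-stabilizer (sub-)group scheme, inductively along a Jordan--H\"older filtration of $A$.

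The main obstacle is justifying the two smoothness statements invoked above: formal smoothness of $q$ near $b$ and smoothness of the $\sigma$-stabilizer of $\tilde b$. Both rest on the fact that $K_n$ is a pro-smooth affine group scheme and on standard descent for quotients by smooth group actions, but care is needed because the $K_n$-action by $\sigma$-conjugation need not be free and $\overline{K\e^\mu K}$ is only pro-algebraic; in practice one verifies these after the truncation by $K_m$ for $m$ large, where everything becomes finite-dimensional in the sense of Remark \ref{rem221}.
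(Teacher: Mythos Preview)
Your proposal rests on a misreading of the object $K_n\backslash\overline{K\epsilon^\mu K}$: throughout the paper (cf.\ the definition of dimension via $X/K_n$ and Remark~\ref{rem221}) this denotes the quotient by \emph{left translation} by $K_n$, not by $\sigma$-conjugation. The equivalence relation $\cong_n$ defining $\mathscr{D}ef(b)_n$ is $\sigma$-conjugation by elements of $K_n$ reducing to $1$ on the closed point, which is a priori unrelated to the left-$K_n$-coset relation. The paper bridges this gap by a Frobenius-nilpotence argument that your proposal omits entirely: one fixes a section $b_n$ of the left-coset projection (this exists because left translation is free and $K_n$ is pro-smooth), and then works inductively along the filtration $A_i=A/\mathfrak m^{q^i}$. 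At each step the discrepancy $\delta\in K_n(A_i)$ between the section value and the $\sigma$-conjugated element satisfies $\delta\equiv 1\pmod{\mathfrak m^{q^{i-1}}}$, hence $\sigma(\delta)=1$ in $A_i$ (since $\sigma$ raises coefficients to the $q$-th power), so that left multiplication by $\delta$ coincides with $\sigma$-conjugation by $\delta^{-1}$. This is the mechanism that converts the left-coset description into $\cong_n$-equivalence, and it is the heart of the proof.

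With your interpretation the two smoothness statements you flag as obstacles are genuine gaps, not routine verifications. A quotient by a smooth group action need not be representable or smooth without freeness; the $\sigma$-conjugation action of $K_n$ on $\overline{K\epsilon^\mu K}$ is certainly not free, and there is no reason for the categorical quotient to agree with the formal scheme $D_{b,n}$ appearing in the statement. More seriously, your injectivity step requires lifting $g_k\in J_b(F)\cap K_n(k)$ to a $\sigma$-centralizer $g_0$ of the \emph{deformed} element $\tilde b$ over $A$. The $\sigma$-stabilizer of $\tilde b$ has no reason to be formally smooth over $A$, and for a generic deformation it can be strictly smaller than that of $b$; so such a lift need not exist. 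The paper's approach sidesteps both issues completely by never forming a $\sigma$-conjugation quotient and never needing to lift $\sigma$-centralizers.
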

Here $\overline{K\e^{\mu} K}=\bigcup_{\mu'\preceq\mu}K\e^{\mu'}K$ denotes the closure of $K\e^{\mu}K$ in $LG$.

\begin{proof}
We follow the proof of the corresponding statement for $n=0$ and split $G$ in \cite{HV1}, Theorem 5.6. Let $p:\overline{K\e^{\mu} K}\to K_n\backslash \overline{K\e^{\mu} K}$ be the projection morphism. Let $D_{b,n}$ be the formal completion of $K_n\backslash\overline{K\e^{\mu} K}$ at $K_nb$. By \cite{foliat} Lemma 2.3 (or rather its generalization to unramified, not necessarily split $G$, which holds by the same proof), we have a section $b_n:D_{b,n}\to \overline{K\e^{\mu} K}\hookrightarrow LG$ of $p$ over $D_{b,n}$ which maps the closed point to $b$. Composition with $b_n$ yields a map of functors $D_{b,n}\to \mathscr{D}ef(b)_n$. We want to show that it is bijective. Let $\tilde b\in LG(A)$ be a representative of a class in $\mathscr{D}ef(b)_n(A)$ for some $A$ as above. We have to show that there is a unique homomorphism $u:\Spec A\rightarrow D_{b,n}$ such that $\tilde b\cong_n b_n\circ u$, which also does not depend on the choice of $\tilde b$ within its class in $\mathscr{D}ef(b)_n(A)$.

Let $\m\subset A$ be the maximal ideal. We use induction on $i$ to show that the same is true over $A_i=A/\m^{q^i}$, for some homomorphism $u_i:\Spec A_i\rightarrow D_{b,n}$ with $u_i\equiv u_{i-1}\pmod{\m^{q^{i-1}}}$ and $g_{i}^{-1}\tilde b\sigma(g_{i})=b_n\circ u_{i}\pmod{\m^{q^{i}}}$ for some $g_{i}\in K_n(A_{i})$ with $g_i\equiv g_{i-1}\pmod{\m^{q^{i-1}}}$ and $g_0=1$. Furthermore, we want to show that $u_i$ does not depend on the choice of $\tilde b$ within its class in $\mathscr{D}ef(b)_n(A)$.

For $i=0$ this is true by our choice of $b_n$. Let $\tilde b_i$ denote the restriction of $\tilde b$ to $\Spec A_i$. Assume that we have constructed $u_{i-1}:\Spec A_{i-1}\rightarrow D_{b,n}$ and $g_{i-1}\in K_n(A_{i-1})$ with $g_{i-1}^{-1}\tilde b_{i-1}\sigma(g_{i-1})= b_n\circ u_{i-1}\pmod{\m^{q^{i-1}}}$. As $K_n$ is smooth, we can choose a lift $g_i'\in K_n(A_i)$ of $g_{i-1}$. The morphism $({g_i}')^{-1}\tilde b_{i}\sigma(g_{i}')$ factors through the formal completion of $\overline{K\e^{\mu}K}$ in $b$. We thus obtain a morphism $u_i: \Spec A_i\rightarrow D_{b,n}$ with $u_i|_{\Spec A_{i-1}}=u_{i-1}$ and $\delta (g_i')^{-1}\tilde b_{i}\sigma(g_{i}')=b_n\circ u_i$ for some $\delta\in K_n(A_i).$ As $\delta|_{\Spec A_{i-1}}=1$, we have $\sigma(\delta)=1$ and thus $\delta (g_i')^{-1}\tilde b_{i}\sigma(g_{i}')=(g_{i}'\delta^{-1})^{-1} \tilde b_{i}\sigma(g_{i}'\delta^{-1})$ have the same class in $\mathscr{D}ef(b)_n(A_i)$. Thus $u_i$ and $g_i=g_{i}'\delta^{-1}$ are as required. The same proof also shows uniqueness of the $u_i$, and independence of the choice of $\tilde b$.
\end{proof}

We have a natural projection morphism $D_{b,n}\rightarrow D_{b,0}$ for every $n$. Using the universal object $b_0$ over $D_{b,0}$ we can define a section $s:D_{b,0}\rightarrow D_{b,n}$ which of course depends on the choice of the universal object. Let $(D_{b,0}\times (K_n\backslash K))^\wedge$ denote the completion of $D_{b,0}\times (K_n\backslash K)$ at $(b,1)$. Then $s$ induces a morphism $$\phi:(D_{b,0}\times (K_n\backslash K))^\wedge\to D_{b,n}$$ given
by $g^{-1}b_n(s)\sigma(g)$. Here $g$ is a local section of the projection $K\to K_n\backslash K$
at $1$ and $b_n$ is the universal object $D_{b,n}\to LG.$ Note that $g^{-1}b_n(s)\sigma(g)$ yields an element of $D_{b,n}$ independent of the choice of $b_n$. Indeed, different choices of $b_n$ lead to $b_n(s)$ differing by $K_n$-$\sigma$-conjugation and $K_n$ is normal in $K$. Furthermore, the morphism $\phi$ is independent the choice of the local section $g.$ It does, however, depend on the choice of $b_0$.

\begin{lemma}\label{lem27}
 The morphism $\phi:(D_{b,0}\times (K_n\backslash K))^\wedge\to D_{b,n}$ given above is an isomorphism.
\end{lemma}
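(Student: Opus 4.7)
The strategy is to exhibit an explicit inverse on functorial points, using the identifications $D_{b,n}(A) = \mathscr{D}ef(b)_n(A)$ and $D_{b,0}(A) = \mathscr{D}ef(b)_0(A)$ for $A \in (Art/k)$ provided by Proposition~\ref{propdef}.

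Given a class $[\tilde b] \in \mathscr{D}ef(b)_n(A)$, let $u \in D_{b,0}(A)$ be its image under the natural coarsening $\mathscr{D}ef(b)_n \to \mathscr{D}ef(b)_0$. By construction of the section $s$, the element $b_n(s(u)) \in LG(A)$ represents $u$ in $\mathscr{D}ef(b)_0(A)$, so there exists $g \in K(A)$ with $g_k = 1$ and $\tilde b = g^{-1} b_n(s(u)) \sigma(g)$. I propose the inverse $\psi([\tilde b]) := (u, [g]) \in (D_{b,0} \times (K_n\backslash K))^\wedge(A)$, where $[g]$ denotes the image of $g$ in the formal completion of $K_n\backslash K$ at $1$. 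Provided $[g]$ is well-defined, the identities $\phi \circ \psi = \mathrm{id}$ and $\psi \circ \phi = \mathrm{id}$ follow directly from the defining formula $\phi(u, [g]) = [g^{-1} b_n(s(u)) \sigma(g)]$: one composition recovers $\tilde b$ as $g^{-1} b_n(s(u))\sigma(g)$, and the other retrieves $u$ as the image in $D_{b,0}(A)$ together with the same $g$.

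The technical core is the well-definedness of $[g]$. If $\tilde b$ is replaced by a $K_n$-$\sigma$-conjugate $k^{-1}\tilde b\sigma(k)$ with $k \in K_n(A)$ and $k_k = 1$, the corresponding $g$ becomes $gk$; since $K_n$ is normal in $K$, the class $[gk] = [g]$ in $(K_n\backslash K)(A)$. Independence from the particular lift $g$ realizing $\tilde b = g^{-1} b_n(s(u))\sigma(g)$ for a fixed representative is the main obstacle. I would establish this by imitating the inductive construction in the proof of Proposition~\ref{propdef}, working over the successive quotients $A_i = A/\m^{q^i}$: at each stage, one uses smoothness of $K$ to lift $g_{i-1}$ to some $g_i' \in K(A_i)$, and the failure of $(g_i')^{-1}\tilde b\sigma(g_i')$ to equal $b_n(s(u))$ on the nose is absorbed by a correction factor $\delta \in K_n(A_i)$ reducing to $1$ on $A_{i-1}$. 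The remaining ambiguity at each step is precisely of the form dictated by $K_n$-$\sigma$-conjugation, so that passing to $(K_n\backslash K)(A)$ yields a canonical class, verifying that $\psi$ is a genuine morphism of functors and thus the inverse of $\phi$.
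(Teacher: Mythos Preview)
Your approach is essentially the same as the paper's: both construct the inverse $\psi$ by sending $\tilde b \in D_{b,n}(A)$ to the pair consisting of its image $u$ under the projection to $D_{b,0}$ and the class in $(K_n\backslash K)^\wedge_1$ of the element $g\in K(A)$ with $g|_k=1$ realising $\tilde b$ as a $\sigma$-conjugate of $b_0(u)=b_n(s(u))$. The paper simply asserts that this $g$ is well-defined in $K_n\backslash K$, whereas you spell out the two sources of ambiguity and how to control them; your inductive sketch is in fact stronger than needed, since for a \emph{fixed} representative $\tilde b$ the equation $c\,b_n(s(u))=b_n(s(u))\,\sigma(c)$ with $c|_k=1$ forces $c=1$ by the usual $\m^{q^i}$-filtration argument, so the only genuine ambiguity is the one coming from changing $\tilde b$ within its $K_n$-$\sigma$-conjugacy class, which you handle via normality of $K_n$.
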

\begin{proof} We construct an inverse of $\phi.$ Let $\tilde b\in D_{b,n}(A)$ for some $A$. From the natural projection $pr:D_{b,n}\to D_{b,0},$ we obtain a $g\in K(A)$ such that $\tilde b=g(b_0\circ pr(\tilde b))\sigma(g^{-1})$ and $g|_k=1.$ This $g$ is uniquely defined as an element of the completion at $1$ of $K_n\backslash K$. Thus the $g$ for varying $A$ glue to give a morphism $D_{b,n}\to (K_n\backslash K)_1^{\wedge}.$ Together with $pr$ we obtain a morphism $\psi:D_{b,n}\rightarrow (D_{b,0}\times (K_n\backslash K))^\wedge$. It is easy to see that $\psi$ is an inverse of $\phi$.
\end{proof}

\begin{lemma}
Let $R$ be an admissible $\mathbb{F}_q$-algebra with filtered index poset $\mathbb{N}_0$. Then pullback by the natural morphism $\Spf R\rightarrow \Spec R$ induces a bijection between $\Spec R$-valued points and $\Spf R$-valued points of $\overline{K\mu K}$.
\end{lemma}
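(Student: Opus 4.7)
The plan is to show that $\overline{K\e^\mu K}$ is an affine scheme and then invoke the tautology that $\Hom(A,-)$ commutes with inverse limits.

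First I would establish affineness of $\overline{K\e^\mu K}$. Embedding $G$ as a closed subgroup of some $\GL_n$ gives a closed immersion $LG\hookrightarrow L\GL_n$. Boundedness by $\mu$ implies that $\overline{K\e^\mu K}$ lies inside the bounded piece $L^{\le N}\GL_n$ consisting of those $g$ with both $\e^N g$ and $\e^N g^{-1}$ in $M_n(R\ll\e\rr)$, for some $N$ depending on $\mu$. This piece is cut out inside the affine scheme $\bigl(\e^{-N}M_n(R\ll\e\rr)\bigr)^2$ by the polynomial relation $g\cdot g^{-1}=1$, hence is an affine (non-noetherian) $\F_q$-scheme. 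Therefore $\overline{K\e^\mu K}$, as a closed subscheme, is itself affine; write it as $\Spec A$.

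Second, the admissibility hypothesis supplies a filtration $R=\varprojlim_{n\in\mathbb N_0} R/I_n$ together with the identification $\Spf R=\varinjlim_n\Spec R/I_n$ of formal schemes, under which the natural morphism $\Spf R\to\Spec R$ of the lemma is the canonical one. Then
\[
\overline{K\e^\mu K}(R)=\Hom_{\F_q}(A,R)=\Hom_{\F_q}\bigl(A,\varprojlim_n R/I_n\bigr)=\varprojlim_n\Hom_{\F_q}(A,R/I_n),
\]
and the right-hand side is exactly $\varprojlim_n\overline{K\e^\mu K}(R/I_n)=\Hom(\Spf R,\overline{K\e^\mu K})$, the set of $\Spf R$-valued points of the scheme $\overline{K\e^\mu K}$.

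The hard part will be the first step --- one must be comfortable with the representability of the bounded pieces of $LG$ as affine (though infinite-type) schemes, which requires some unwinding of the definitions but is standard once $G$ is embedded into $\GL_n$. The second step is then purely formal. The role of the bound by $\mu$ is what makes the argument go through: without it one would only have the ind-scheme $LG$, whose $\Spec R$- and $\Spf R$-valued points genuinely differ, since a Laurent series over $R=\varprojlim R/I_n$ can acquire unbounded orders of pole in the limit that are not witnessed on any single quotient.
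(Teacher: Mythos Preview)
Your argument is correct and is essentially the content of the reference the paper cites. The paper's own proof consists only of a pointer to \cite{HV1}, Proposition~3.16 (restricted to trivial local $G$-shtukas), and the ``second half'' of that proof is precisely the two-step mechanism you describe: first, boundedness by $\mu$ forces $\overline{K\e^\mu K}$ to sit as a closed subscheme in an affine (infinite-type) scheme, hence is itself affine, say $\Spec A$; second, for any affine target one has $\Hom(A,\varprojlim_n R/I_n)=\varprojlim_n\Hom(A,R/I_n)$, which is exactly the asserted bijection once one uses completeness of the admissible ring $R$. Your remark at the end, that without the bound one only has an ind-scheme and the statement fails, is also the right diagnosis of why $\mu$ matters. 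So your proposal and the paper's proof agree in substance; yours is simply the explicit form of what the citation encodes.
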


\begin{proof}
This is the direct analog of \cite{HV1}, Prop. 3.16 (except that in the language of loc. cit. we are only considering trivial local $G$-shtukas here). It follows from the second half of the proof of loc.~cit.
\end{proof}

Using the lemma we can associate with the formal scheme $D_{b,n}$ a scheme $D_{b,n}'$, and the universal object induces a morphism  $D_{b,n}'\rightarrow LG$. In particular we can study the Newton stratification and central leaves on $D_{b,n}'.$

We now generalize \cite{foliat},Theorem 6.5 from the case that $G$ is split to our case of unramified groups $G$. The proofs are almost the same in this more general context. Therefore, we limit the proof we give here to only giving the references for new results we need from other papers, and to explaining the modifications that one needs to make.

We need the following notation from \cite{trunc1} Def. 6.1. Let $x$ be a fundamental alcove. We define $\phi_x:LG\rightarrow LG$ with $g\mapsto \sigma(xgx^{-1})$. By \cite{Nie}, Theorem 1.3 every fundamental alcove is $P$-fundamental for some semistandard parabolic subgroup $P$. Let $\overline N$ be the unipotent radical of the opposite parabolic and let $I_{\overline N}=I\cap L\overline N$. Then by definition of $P$-fundamental alcoves we have $\phi_x(I_{\overline N})\supseteq I_{\overline N}$, i.e. $\phi_x^{-1}(I_{\overline N})=x^{-1}I_{\sigma^{-1}(\bar{N})}x\subseteq I_{\bar{N}}$.

\begin{thm}\label{thmfoliat}
 Let ${b}\in K\e^{\mu} K$. Let $\mathcal{N}_{[b]}$ be the Newton stratum of $[b]$ in
 $\Spec D_{b,0}'.$ Let $x$ be a $P$-fundamental alcove associated with $[b]$ where $P$ is chosen such that the Levi subgroup $M$ of $P$ equals the centralizer of the $M$-dominant Newton point of $x.$ Then there is a reduced scheme $S$ and a finite surjective morphism $S\to \mathcal{N}_{[b]}$ which factors into finite surjective morphisms $$S\to ((X_{\leq\mu,K}(b)\times_k I_{\bar{N}}/ x^{-1}I_{\sigma^{-1}(\bar{N})}x)^\wedge)' \to \mathcal{N}_{[b]}.$$ Here again, the $(\cdot)'$ denotes the scheme associated with the formal scheme obtained by completion.
 Furthermore, the central leaf $\mathcal C_{b}$ of $b$ in $\Spec D_{b,0}'$ is smooth and equal to the image of $(\{1\}\widehat{\times}_k I_{\bar{N}}/ x^{-1}I_{\sigma^{-1}(\bar{N})}x)^\wedge)'$ in $\mathcal{N}_{[b]}.$
\end{thm}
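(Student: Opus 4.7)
The plan is to follow the proof of \cite{foliat}, Theorem 6.5 from the split case, carrying the Frobenius twist through every construction. The $P$-fundamental alcove $x$ and the inclusion $\phi_x^{-1}(I_{\bar{N}}) = x^{-1}I_{\sigma^{-1}(\bar{N})}x \subseteq I_{\bar{N}}$ recorded just before the statement furnish the replacements for the $\sigma$-stable data available in the split argument.

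First, I would construct the parameterizing map. For an Artinian test algebra $A\in (Art/k)$, a pair $(gK, n)$ with $gK\in X_{\leq\mu,K}(b)(A)$ and $n\in I_{\bar{N}}(A)$ produces an $A$-point of $\mathcal{N}_{[b]}$ essentially by $\sigma$-conjugating the lift $n x \sigma(n)^{-1}$ of $x$ by the chosen representative $g$ of $gK$. The Rapoport--Zink result invoked in the proof of Proposition \ref{prop23}, together with the Iwahori factorization $I = I_{\bar{N}}\cdot(I\cap LP)$, ensures that every point of $\mathcal{N}_{[b]}$ arises in this way, and changing $n$ by right multiplication with an element of $\phi_x^{-1}(I_{\bar{N}}) = x^{-1}I_{\sigma^{-1}(\bar{N})}x$ leaves the resulting point of $\mathcal{N}_{[b]}$ unchanged. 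Thus the map descends to the formal completion $(X_{\leq\mu,K}(b)\times_k I_{\bar{N}}/x^{-1}I_{\sigma^{-1}(\bar{N})}x)^\wedge$ at $(b,1)$; passing to the associated scheme via the lemma preceding the theorem gives the middle term in the factorization.

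Next, I would verify finite surjectivity. The fibers of the middle map are orbits under the finite $\phi_x$-stabilizer of $x$ modulo the relevant Iwahori piece, and one produces $S$ as in \cite{foliat} by passing to a finite cover that turns the resulting quasi-finite morphism into a genuinely finite one. The assertion on central leaves then follows by specializing to $gK = 1$: in this case the construction reduces to the orbit map $n\mapsto n x \sigma(n)^{-1}$ from $I_{\bar{N}}$ onto the $K$-$\sigma$-conjugacy class of $x$ in $\Spec D_{b,0}'$, whose fiber over $x$ is exactly $x^{-1}I_{\sigma^{-1}(\bar{N})}x$. The induced map from the quotient is therefore bijective, and smoothness of $\mathcal{C}_b$ is inherited from smoothness of the pro-unipotent quotient $I_{\bar{N}}/x^{-1}I_{\sigma^{-1}(\bar{N})}x$.

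The main obstacle is the Frobenius bookkeeping: in the split setting $\sigma$ fixes both the root datum and a lift of $x$, so every identity reduces to a direct root-subgroup calculation, whereas here $x$ is not generally $\sigma$-invariant, $\bar{N}$ and $\sigma^{-1}(\bar{N})$ are a priori distinct, and every assertion of the form ``an element lies in $I_{\bar{N}}$'' has to be re-verified through $\phi_x$. The inclusion $\phi_x^{-1}(I_{\bar{N}})\subseteq I_{\bar{N}}$, which is precisely the defining property of a $P$-fundamental alcove, is what makes the verifications go through; once this is checked at the level of root subgroups, the remainder of the argument of \cite{foliat} applies essentially verbatim.
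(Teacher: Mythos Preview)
Your proposal is correct and takes essentially the same approach as the paper: the paper's proof simply says to follow \cite{foliat}, Theorem 6.5, replacing conjugation by $x$ with the Frobenius-twisted map $\phi_x$ (so that $x^{-n}I_{\bar N}x^n$ becomes $\phi_x^{-n}(I_{\bar N})$), noting that $\sigma^{-1}(I_{\bar N})=I_{\sigma^{-1}(\bar N)}$. You have written out in more detail what that replacement entails, and correctly isolated the inclusion $\phi_x^{-1}(I_{\bar N})\subseteq I_{\bar N}$ as the structural input that makes the split argument survive the Frobenius twist.
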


\begin{proof}
This follows from essentially the same proof as \cite{foliat}, Theorem 6.5, making the following modification. As the fundamental alcove and the associated parabolic subgroup $P$ are in general no longer fixed by $\sigma$, one has to replace conjugation by $x$ with the morphism $\phi_x$ defined above. For example, the frequently occurring subgroups $x^{-n}I_{\bar N} x^n$ are replaced by $\phi_x^{-n}(I_{\bar N})$. To make sense of the right hand side we use that $\sigma$ acts on the root system and fixes $I$, so $\sigma^{-1}(I_{\bar N})=I_{\sigma^{-1}(\bar N)}$ where $\sigma^{-1}(\bar N)$ is again the unipotent radical of a semistandard parabolic subgroup.
\end{proof}

Together with Lemma \ref{lem27} this implies the following corollary.
\begin{cor}
 Let $b\in LG(k)$ be bounded by a dominant $\mu$ and let $n\in\mathbb N$. Let $\mathcal{N}_{[b],n}$ be the Newton stratum of $[b]$ in
 $D_{b,n}'.$ Let $x$ be a $P$-fundamental alcove in $[b]$ such that the Levi subgroup $M$ of $P$ equals the centralizer of the $M$-dominant Newton point of $x.$ Then there is a reduced scheme $S$ and a finite surjective morphism $S\to \mathcal{N}_{[b],n}$ which factors into finite surjective morphisms $$S\to (X_{\leq\mu,K}(b)\times_kK/K_n\times_k (I_{\bar{N}}/ x^{-1}I_{\sigma^{-1}(\bar{N})}x)^\wedge)' \to \mathcal{N}_{[b],n}.$$
 Furthermore, $\mathcal C_{b,\mathcal{N}_{[b],n}}$ is smooth and equal to the image of $(\{1\}\times_kK/K_n\times_k I_{\bar{N}}/ x^{-1}I_{\sigma^{-1}(\bar{N})}x)^\wedge)'$ in $\mathcal{N}_{[b],n}.$
\end{cor}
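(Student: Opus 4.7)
The plan is to deduce the corollary by combining Theorem \ref{thmfoliat} with Lemma \ref{lem27}, viewing the passage from $D_{b,0}$ to $D_{b,n}$ as multiplication by the smooth factor $K_n\backslash K$ (equivalently $K/K_n$).

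First I would apply Lemma \ref{lem27} to obtain the isomorphism $\phi:(D_{b,0}\times (K_n\backslash K))^\wedge\xrightarrow{\sim} D_{b,n}$, which on an $A$-valued point $(y,g)$ sends it to $g^{-1}b_0(y)\sigma(g)$. Since $\sigma$-conjugation preserves the $\sigma$-conjugacy class in $G(L)$, the Newton stratum $\mathcal{N}_{[b],n}$ in $D_{b,n}'$ corresponds under $\phi$ (after passing from formal schemes to schemes via the lemma on admissible $\mathbb F_q$-algebras) to $\mathcal{N}_{[b]}\times_k K_n\backslash K$. Similarly, the central leaf $\mathcal C_{b,\mathcal N_{[b],n}}$, defined as a $K$-$\sigma$-conjugacy class, corresponds under $\phi$ to $\mathcal C_{b,\mathcal N_{[b]}}\times_k K_n\backslash K$: moving in the $K_n\backslash K$-direction is exactly $\sigma$-conjugation by a lift into $K$ (well-defined modulo $K_n$), which, combined with the $K_n$-$\sigma$-orbit that describes the central leaf in $D_{b,0}'$ via Theorem \ref{thmfoliat}, fills out the full $K$-$\sigma$-orbit.

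Next I would take Theorem \ref{thmfoliat} applied to our chosen $P$-fundamental alcove $x$, giving a reduced scheme $S_0$ and finite surjective morphisms
\[
S_0 \to ((X_{\leq\mu,K}(b)\times_k I_{\bar{N}}/ x^{-1}I_{\sigma^{-1}(\bar{N})}x)^\wedge)' \to \mathcal{N}_{[b]},
\]
together with the identification of $\mathcal C_{b,\mathcal N_{[b]}}$ with the image of $((\{1\}\widehat\times_k I_{\bar N}/x^{-1}I_{\sigma^{-1}(\bar N)}x)^\wedge)'$. Setting $S=S_0\times_k K/K_n$ and taking the fibre product of each morphism with the smooth variety $K/K_n$ yields the required finite surjective factorization
\[
S\to (X_{\leq\mu,K}(b)\times_k K/K_n\times_k (I_{\bar{N}}/ x^{-1}I_{\sigma^{-1}(\bar{N})}x)^\wedge)'\to \mathcal{N}_{[b],n},
\]
because finite surjectivity is stable under flat base change and the identifications from the previous paragraph turn the right-hand side into $\mathcal N_{[b],n}$. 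The central leaf statement follows from the corresponding statement in Theorem \ref{thmfoliat} by the same product construction; smoothness is preserved under taking products with the smooth variety $K/K_n$.

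The main obstacle is making precise the compatibility of $\phi$ with the Newton decomposition and the central-leaf structure, in particular identifying the $K$-$\sigma$-orbit through $b$ with the product of the $K_n$-$\sigma$-orbit (which Theorem \ref{thmfoliat} describes at level $n=0$) and $K_n\backslash K$. Once this bookkeeping is in place, everything else follows by pulling back and combining the two previous results; no new geometric input beyond Theorem \ref{thmfoliat} and Lemma \ref{lem27} is needed.
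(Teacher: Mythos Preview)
Your proposal is correct and follows exactly the approach the paper indicates: the paper simply states that the corollary follows from Theorem~\ref{thmfoliat} together with Lemma~\ref{lem27}, without giving further details. Your argument---transporting the level-$0$ description along the isomorphism $\phi$ by taking a product with $K/K_n$---is precisely the intended combination of these two results.
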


\begin{thm}\label{thmdim}
Let $\mu\in X_*(T)_{\dom}$  and $[b]\in B(G,\mu)$ with fundamental alcove $x$. For $y\in \mathcal N_{[b],\mu}$ we denote again by $\mathcal C_y$ the $K$-$\sigma$-conjugacy class of $y$. Then $$\dim \mathcal C_y=\ell(x),$$ only depending on $[b]$, and not on $y$ or $\mu$.
\end{thm}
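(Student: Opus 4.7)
The plan is to reduce the computation of $\dim\mathcal{C}_y$ to a local question in the deformation space $D_{y,n}'$ via the Corollary following Theorem~\ref{thmfoliat}, and then to a combinatorial length identity. Fix $y\in\mathcal{N}_{[b],\mu}$ and, using Proposition~\ref{prop23}, pick $n$ large enough that $\mathcal{C}_yK_n=\mathcal{C}_y$. Proposition~\ref{propdef} together with Lemma~\ref{lem27} identifies $D_{y,n}'$ with the formal neighborhood of $K_ny$ in $K_n\backslash\overline{K\e^\mu K}$, and under this identification the formal completion at $K_ny$ of the image of $\mathcal{C}_y$ in $K_n\backslash LG$ is precisely the central leaf $\mathcal{C}_{y,\mathcal{N}_{[b],n}}$ of $y$ in $D_{y,n}'$. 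Since $\mathcal{C}_y$ is smooth, the definition of $\dim$ for admissible subschemes gives
\[
\dim\mathcal{C}_y=\dim(\mathcal{C}_y/K_n)-n\cdot\dim(G)=\dim\mathcal{C}_{y,\mathcal{N}_{[b],n}}-n\cdot\dim(G).
\]
Choose a $P$-fundamental alcove $x$ of $[b]$ with $M=\mathrm{Cent}(\nu_x)$ (existence by \cite{Nie}, Thm.~1.3). By the Corollary after Theorem~\ref{thmfoliat}, the leaf $\mathcal{C}_{y,\mathcal{N}_{[b],n}}$ is the image under a finite morphism of $(\{1\}\times_kK/K_n\times_k(I_{\bar N}/x^{-1}I_{\sigma^{-1}(\bar N)}x)^{\wedge})'$, hence has dimension $n\cdot\dim(G)+\dim(I_{\bar N}/x^{-1}I_{\sigma^{-1}(\bar N)}x)$. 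Substituting,
\[
\dim\mathcal{C}_y=\dim\bigl(I_{\bar N}/x^{-1}I_{\sigma^{-1}(\bar N)}x\bigr).
\]

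The remaining combinatorial identity $\dim(I_{\bar N}/\phi_x^{-1}(I_{\bar N}))=\ell(x)$ is proved by decomposing $I_{\bar N}$ into affine root subgroups and counting. On one side, the dimension of the quotient equals the number of positive affine roots of $\bar N$ whose image under $\phi_x$ is no longer a positive affine root of $\bar N$. On the other side, because $\sigma$ preserves both the length function on $\widetilde W$ and the set of positive affine roots, one has $\ell(x)=|\{\alpha>0:\phi_x(\alpha)<0\}|$. The $P$-fundamental property together with the constraint $M=\mathrm{Cent}(\nu_x)$ forces $\phi_x$ to preserve $I\cap LM$ and to send $I_N$ into itself, so every length-contributing sign flip is concentrated in the $\bar N$-component; the two counts therefore agree. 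This is the generalization from split to unramified $G$ of the computation underlying the proof of \cite[Thm.~6.5]{foliat}, obtained by substituting $\phi_x$ for $\mathrm{Ad}(x)$ exactly as in Theorem~\ref{thmfoliat} above.

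The resulting formula $\dim\mathcal{C}_y=\ell(x)$ is visibly independent of $y$ and of $\mu$. That the same identity holds for every fundamental alcove of $[b]$, not only the $P$-fundamental one chosen above, then reduces to the equality of lengths of fundamental alcoves of a fixed $[b]$, which is \cite[Lemma~6.11]{trunc1} for split $G$ and is handled analogously in the general case via \cite{Nie}. The main obstacle in this plan is the combinatorial identity in the second paragraph: it requires a careful matching of the Iwahori decomposition with the $\sigma$-twisted conjugation $\phi_x$, using the $P$-fundamental property to ensure that all length-contributing sign flips occur in $\bar N$.
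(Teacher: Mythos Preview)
Your argument is correct and follows the same route as the paper: choose $n$ via Proposition~\ref{prop23}, identify the completion of $K_n\backslash K\e^{\mu}K$ at $K_ny$ with $D_{y,n}$ via Proposition~\ref{propdef}, read off the leaf dimension from the Corollary to Theorem~\ref{thmfoliat}, and conclude $\dim\mathcal C_y=\dim(I_{\bar N}/\phi_x^{-1}(I_{\bar N}))=\ell(x)$. The only differences are cosmetic: the paper phrases the middle comparison via codimensions rather than dimensions, and it simply asserts the final equality $=\ell(x)$ without the affine-root counting justification you supply.
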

Note that as $x$ is fundamental for $[b]$ we have $\ell(x)=\langle 2\rho,\nu\rangle$ where $\nu$ is the Newton point of $[b]$.
\begin{proof}
Let $n$ be big enough such that all central leaves in $K\e^{\mu} K$ are left and right $K_n$-invariant, cf. Proposition \ref{prop23}. Then the codimension of $\mathcal C_y$ in $K\e^{\mu} K$ coincides with the codimension of its image in $K_n\backslash K\e^{\mu} K$. Let $\mathcal C_{y,D_{y,n}}\subseteq D_{y,n}$ be the pullback of the leaf of $y$ in $D_{y,n}'$ to $D_{y,n}$. Going through the proof of Proposition \ref{propdef} (and using our choice of $n$) one sees that the completion of this image in $y$ coincides with $\mathcal C_{y,D_{y,n}}$ under the isomorphism of $D_{y,n}$ with the completion of $K_n\backslash K\e^{\mu} K$ in $y$. Hence the codimension of $\mathcal C_y\subset K\e^{\mu} K$ is equal to the codimension of the leaf of $y$ in $D_{y,n}'$. Furthermore, the dimension of the ambient schemes $K\e^{\mu} K$ and $D_{y,n}$ are $\ell(\mu)$ resp. $\ell(\mu)+\dim K/K_n$ (for the respective notions of dimension). Thus $\dim\mathcal C_y=\dim\mathcal C_{y,D_{y,n}}-\dim K/K_n$. By the preceeding corollary $\dim\mathcal C_{y,D_{y,n}}=\dim (I_{\bar{N}}/x^{-1}I_{\bar{N}}x))+\dim K/K_n$, hence $\dim\mathcal C_y=\dim (I_{\bar{N}}/xI_{\sigma^{-1}(\bar{N})}x))=\ell(x)$.
\end{proof}

\begin{cor}\label{corclosed}
For $y\in \mathcal{N}_{[b],\mu}$, the central leaf $\mathcal C_y$ is closed in $\mathcal{N}_{[b],\mu}$.
\end{cor}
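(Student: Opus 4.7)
\emph{Proof plan.} The plan is to reduce the claim to a pure dimension comparison via Theorem~\ref{thmdim}, which asserts that every central leaf inside $\mathcal{N}_{[b],\mu}$ has the same dimension $\ell(x)$.

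First, the closure $\overline{\mathcal C_y}$ of $\mathcal C_y$ inside $\mathcal{N}_{[b],\mu}$ is stable under $K$-$\sigma$-conjugation, since the action is by algebraic automorphisms preserving the Newton stratum. Consequently it decomposes as a disjoint union $\overline{\mathcal C_y} = \mathcal C_y \sqcup \bigsqcup_i \mathcal C_{y_i}$ of central leaves lying inside $\mathcal{N}_{[b],\mu}$; the goal is to show that the second factor is empty.

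Next, by Proposition~\ref{prop23} and Remark~\ref{rem221}(1) choose $n$ large enough that $\mathcal C_y$, $\mathcal{N}_{[b],\mu}$ and their closures are $K_n$-biinvariant, and descend to the finite-type quotient $K\e^\mu K/K_n$. The image $\mathcal C_y/K_n$ is the image of the morphism $K \to K\e^\mu K/K_n$, $g\mapsto g^{-1}y\sigma(g)K_n$. Since $G$ is connected reductive, each $K/K_n$ is smooth and connected, so $K$ is irreducible as a proscheme and $\mathcal C_y/K_n$ is irreducible. Hence $\mathcal C_y$ and $\overline{\mathcal C_y}$ are irreducible, with $\mathcal C_y$ an open dense subscheme of $\overline{\mathcal C_y}$, and $\dim\overline{\mathcal C_y} = \dim \mathcal C_y = \ell(x)$ by Theorem~\ref{thmdim}.

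Finally, apply the dimension principle for irreducible schemes. If $\bigsqcup_i\mathcal C_{y_i}$ were nonempty, it would be a nonempty proper closed subset of the irreducible $\ell(x)$-dimensional scheme $\overline{\mathcal C_y}$, hence of dimension strictly less than $\ell(x)$. But each $\mathcal C_{y_i}\subset\mathcal{N}_{[b],\mu}$ has dimension exactly $\ell(x)$ by Theorem~\ref{thmdim}, contradicting the previous inequality. Therefore $\overline{\mathcal C_y}=\mathcal C_y$. The main obstacle in this approach is the irreducibility claim, which ultimately rests on the connectedness of $K/K_n$; once that is granted, the remainder is the standard dimension principle.
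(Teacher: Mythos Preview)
Your proof is correct and follows essentially the same approach as the paper: both argue that the closure is a union of central leaves, invoke Theorem~\ref{thmdim} to see that all such leaves have the same dimension $\ell(x)$, and conclude via the dimension principle that no extra leaves can occur. You spell out the irreducibility of $\mathcal C_y$ (via connectedness of $K/K_n$), which the paper leaves implicit when asserting that the boundary of $\mathcal C_y$ consists of leaves of strictly smaller dimension.
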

\begin{proof}
As $\mathcal C_y$ is bounded and admissible, the same holds for its closure in $\mathcal{N}_{[b],\mu}$. The closure is invariant under $K$-$\sigma$-conjugation, hence it is a union of central leaves. Thus it has to be a union of $\mathcal C_y$ with central leaves of strictly smaller dimension. But as the dimension of all central leaves in this Newton stratum is the same, $\mathcal C_y$ has to be closed in $\mathcal{N}_{[b],\mu}$.
\end{proof}

\section{The minuscule case for $\GL_5$}\label{sec3}
In this section we prove Theorem \ref{theorem_main}. Let $G=\GL_5$, $B$ the Borel subgroup of upper triangular matrices and $T$ the diagonal torus.
\subsection{Reduction to two particular cases}\label{sec31}

Let $\mu$ be minuscule. Identifying $X_*(T)$ with $\mathbb{Z}^5$, this implies that $\mu\in X_*(T)_{\dom}$ is of the form $(i,\dotsc,i,i-1,\dotsc,i-1)$ for some $i\in \Z$ and with multiplicities $n$ and $5-n$ for some $n$. Modifying $\mu$ by a central cocharacter does not change our question. Thus we may assume that $i=1$.

A direct calculation using the dimension formula of \cite{GHKR} and \cite{dimdlv} shows that $\dim X_{\mu}(b)=0$ for all pairs $([b],\mu)$ unless $n=2$ or $3$. Thus the theorem holds for $n=0,1,4,5$ by Lemma \ref{lem1}. 

The cases $n=2$ or $3$ are analogous to each other as $(1,1,1,0,0)$ differs from $(1,1,0,0,0)$ by a central cocharacter together with inverting $\mu$. Thus from now on we assume that $n=2$. For this case, the set $B(G,\mu)$ has eight elements $[b_i]$, with the following Newton points $\nu_{i}\in\Q^5$.\\[2mm]
\begin{minipage}[t]{0.5\textwidth}

\begin{enumerate}
\item[] $\nu_{1}=\big(\frac{2}{5}^{(5)}\big)$\\
\item[] $\nu_{2}=\big(\frac{1}{2}^{(2)},\frac{1}{3}^{(3)}\big)$\\
\item[] $\nu_{3}=\big(\frac{1}{2}^{(4)},0\big)$\\
\item[] $\nu_{4}=\big(1,\frac{1}{4}^{(4)}\big)$
\end{enumerate}

\end{minipage}
\begin{minipage}[t]{0.5\textwidth}
\begin{enumerate}
\item[] $\nu_{5}=\big(\frac{2}{3}^{(3)},0^{(2)}\big)$\\
\item[] $\nu_{6}=\big(1,\frac{1}{3}^{(3)},0\big)$\\
\item[] $\nu_{7}=\big(1,\frac{1}{2}^{(2)},0^{(2)}\big)$\\
\item[] $\nu_{8}=\big(1^{(2)},0^{(3)}\big)$.
\end{enumerate}

\end{minipage}\\[2mm]

\noindent Here exponents in brackets denote the multiplicity of the corresponding entry.

The dimension formula of \cite{GHKR} and \cite{dimdlv} implies $$\dim X_{\mu}(b_i)=\begin{cases}1&\text{if }i\leq 3\\
0&\text{otherwise.}
\end{cases}$$
Thus, Lemma \ref{lem1} shows that the theorem is true for every central leaf in a Newton stratum for $[b_i]$ with $i\geq 4$. The Newton stratum for $\nu_1$ is the unique closed Newton stratum. Thus there is nothing to show in that case. Using that the ordering between $[b_1],[b_2],[b_3]$ is total and $[b_1]\prec[b_2]\prec [b_3]$, we see that it remains to show the following two assertions.

\begin{lemma}\label{lemma2}
In the above situation let $\mathcal C_b$ be a central leaf for any $b\in [b_3]\cap K\e^{\mu} K$. Then $\overline{\mathcal C_b}$ contains the central leaf $\mathcal C_{x}$ where $x$ is a fundamental alcove in $W\mu W$ of $[b_2]$.
\end{lemma}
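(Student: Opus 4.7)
The plan is to produce, for every central leaf $\mathcal{C}_b$ in $\mathcal{N}_{[b_3],\mu}$, an explicit one-parameter family in $\overline{K\epsilon^\mu K}$ whose generic fiber lies in $\mathcal{C}_b$ and whose special fiber lies in $\mathcal{C}_{x_{b_2}}$. The starting observation is a convenient explicit degeneration: take the block-diagonal fundamental alcove $x_{b_2}=\mathrm{diag}(A_2,A_3)$ in $W\mu W$ realizing $N(1/2)\oplus N(1/3)$ (with $A_2,A_3$ the standard length-$2$ and length-$3$ cycle matrices carrying a single $\epsilon$), and consider the deformation
$$b(t)=\mathrm{diag}(A_2,B(t)),\qquad B(t)=\begin{pmatrix}0&0&\epsilon\\1&0&0\\0&1&t\end{pmatrix}.$$
All $b(t)$ lie in $K\epsilon^\mu K$ since their Smith invariants are unchanged. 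Since $\det(XI-B(t))=X^3-tX^2-\epsilon$, reading off Newton polygons shows that $b(0)=x_{b_2}\in\mathcal{C}_{x_{b_2}}\subset\mathcal{N}_{[b_2],\mu}$, while $b(t)\in\mathcal{N}_{[b_3],\mu}$ for every $t\in k^\times$. In particular $\mathcal{C}_{x_{b_2}}\subseteq\overline{\mathcal{C}_{b(t)}}$ for these $t$.

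The remaining task is to show that this deformation, possibly augmented by variants, reaches every central leaf in $\mathcal{N}_{[b_3],\mu}$. By Theorem \ref{thmdim}, $\dim\mathcal{C}_b=\langle 2\rho,\nu_3\rangle=2$ for $b\in\mathcal{N}_{[b_3],\mu}$, while $\dim\mathcal{N}_{[b_3],\mu}=2+\dim X_\mu(b_3)=3$, so the leaves form a one-parameter family, identified with $J_{b_3}(F)\backslash X_\mu(b_3)$. The assignment $t\mapsto[\mathcal{C}_{b(t)}]$ is then a morphism between one-dimensional spaces; a short calculation with diagonal $K$-$\sigma$-conjugations by $\mathrm{diag}(1,1,s,\sigma(s),\sigma^2(s))$, $s\in\mathbb{F}_{q^3}^\times$, shows that $b(t)$ and $b(t s^{q-1})$ determine the same leaf, so the map factors through $k^\times/\ker(N_{\mathbb{F}_{q^3}/\mathbb{F}_q})$. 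Provided it is non-constant it is dominant, and any leaves it misses can be reached by constructing auxiliary deformations of $x_{b_2}$ (perturbing other entries, in particular those coupling the $A_2$ and $A_3$ blocks), each still having $x_{b_2}$ as its zero fiber. The leaf of the fundamental alcove $x_{b_3}$ of $[b_3]$ is handled independently by Proposition \ref{thmfund} applied to $[b_2]\prec[b_3]$.

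The main obstacle is this coverage check: one must exhibit enough perturbations of $x_{b_2}$ that their generic fibers collectively populate every $J_{b_3}(F)$-orbit on $X_\mu(b_3)$, or equivalently that the induced map to the one-dimensional space of leaves is surjective after allowing for the auxiliary families. For $\GL_5$ with minuscule $\mu$ the relevant varieties are one- or two-dimensional and amenable to direct linear-algebra computation, which is consistent with the paper's restriction to this illustrative small-rank example.
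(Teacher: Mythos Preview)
Your proposal contains a genuine gap, which you yourself flag in the final paragraph: you never establish that the family $t\mapsto b(t)$ (together with unspecified ``auxiliary deformations'') reaches \emph{every} central leaf in $\mathcal N_{[b_3],\mu}$. The heuristic that ``the assignment $t\mapsto[\mathcal C_{b(t)}]$ is a morphism between one-dimensional spaces, hence dominant if non-constant'' does not stand up: the set of leaves is the quotient $J_{b_3}(F)\backslash X_\mu(b_3)$, which is not a variety, so there is no morphism to speak of; and even for honest curves, dominant is not the same as surjective. More concretely, your family $b(t)=\mathrm{diag}(A_2,B(t))$ is block-diagonal with a split $N(1/2)$-summand, but there is no Hodge--Newton break at that position (the Hodge and Newton polygons for $\mu$ and $\nu_3$ meet only at $(4,2)$ and $(5,2)$), so there is no reason a general leaf in $\mathcal N_{[b_3],\mu}$ admits such a block-diagonal representative. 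The appeal to Proposition~\ref{thmfund} handles only the single leaf of the fundamental alcove $x_{b_3}$, not the others.

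The paper's proof runs in the opposite direction and thereby avoids the coverage problem entirely. It first proves (Lemma~\ref{lemrep1}) that \emph{every} central leaf in $\mathcal N_{[b_3],\mu}$ has an explicit representative $x_t$; this uses the genuine Hodge--Newton splitting into $\mathbb G_m\times\GL_4$. Then, for each fixed $t$, it constructs an $R$-valued point of $\mathcal C_{x_t}$ (with $R$ the valuation ring of $\overline{k(\!(\pi)\!)}$) specialising at $\pi=0$ to the fundamental-alcove representative $b_2$ of $[b_2]$. The construction is not a naive perturbation of entries: one first solves an auxiliary $\sigma$-conjugacy equation in $\GL_3$ over $\overline{k(\!(\pi)\!)}$, obtaining an explicit $g$ with carefully controlled $\pi$-adic valuations of its entries, and then uses $\mathrm{diag}(g,1,1)$ to $\sigma$-conjugate $x_t$ into $LG(R)$. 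The valuation estimates are what guarantee integrality over $R$ and hence the desired specialisation. This is the step your sketch is missing; without it (or an equivalent surjectivity argument for your family of deformations) the lemma is not proved.
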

\begin{lemma}\label{lemma3}
In the above situation let $\mathcal C_b$ be a central leaf for any $b\in [b_2]\cap K\e^{\mu} K$. Then $\overline{\mathcal C_b}$ contains the central leaf $\mathcal C_{x}$ where $x$ is a fundamental alcove in $W\mu W$ of $[b_1]$.
\end{lemma}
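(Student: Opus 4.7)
My plan is to combine an explicit $\mathbb{A}^1$-deformation of the fundamental alcove $x_1$ with a transport step reaching every central leaf of $[b_2]$. Take $x_1$ to be the cyclic matrix with entries $1$ at positions $(1,5), (4,3), (5,4)$ and $\e$ at $(2,1), (3,2)$; then $x_1^5 = \e^2 I$, so $x_1$ is a fundamental alcove for $[b_1]$ in $W\mu W$. Define $M\colon \mathbb{A}^1 \to \GL_5(L)$ by letting $M(t)$ agree with $x_1$ except that the $(2,3)$-entry is $t$. Expanding along the last column gives
\[
\det(\lambda I - M(t)) = \lambda^5 - t\e\lambda^3 - \e^2,
\]
whose Newton polygon is $\nu_1 = (2/5)^{(5)}$ at $t=0$ and $\nu_2$ for every $t\neq 0$. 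An elementary-divisor calculation (the gcd of $k\times k$ minors equals $1$ for $k\leq 3$ and $\e$ for $k=4$, while $\det M(t) = \e^2$) shows that the Smith form of $M(t)$ is $\text{diag}(1,1,1,\e,\e)$ for every $t$, so $M(t)\in K\e^\mu K$ throughout. Since $\mathbb{A}^1\setminus\{0\}$ is connected and central leaves inside $\mathcal{N}_{[b_2],\mu}$ are pairwise disjoint by Corollary~\ref{corclosed}, the family $\{M(t): t\neq 0\}$ lies in a single central leaf $\mathcal{C}_{M(1)}$; in particular $x_1 = M(0)\in \overline{\mathcal{C}_{M(1)}}$, and $K$-$\sigma$-invariance of closures gives $\mathcal{C}_{x_1}\subset \overline{\mathcal{C}_{M(1)}}$.

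To upgrade this from the single leaf $\mathcal{C}_{M(1)}$ to every central leaf of $\mathcal{N}_{[b_2],\mu}$, I would vary the perturbation. For $s$ in an auxiliary affine parameter space $S$, set $M_s(t) = x_1 + t\cdot P(s)$, where $P(s)$ is a linear combination of elementary matrices supported on entries outside the $5$-cycle support of $x_1$ (chosen so that $\det M_s(t) = \e^2$ is preserved and the same Smith form and characteristic polynomial structure go through). Each valid $s$ yields an inclusion $\mathcal{C}_{x_1}\subset \overline{\mathcal{C}_{M_s(1)}}$ and defines a map $\Phi\colon S \to \{\text{central leaves of }\mathcal{N}_{[b_2],\mu}\}$. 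Recall that the central leaves in $\mathcal{N}_{[b_2],\mu}$ correspond to the $J_{b_2}(F)$-orbits on the $1$-dimensional variety $X_\mu(b_2)$, and by \cite{Niecc} the group $J_{x_1}(F)$ acts transitively on $\pi_0(X_{\preceq \mu}(x_1))$. I would invoke Theorem~\ref{thmfoliat}, which describes the Newton stratum $\mathcal{N}_{[b_1]}$ in $\Spec D_{x_1,0}'$ via $X_{\preceq \mu, K}(x_1)$, to match the image of $\Phi$ with the orbits on $X_\mu(b_2)$ and then transport by $J_{x_1}(F)$ to achieve all of them. Once $\Phi$ is surjective, for any $b\in [b_2]\cap K\e^\mu K$ there is $s$ with $M_s(1)\in \mathcal{C}_b$, hence $x_1 = M_s(0)\in \overline{\mathcal{C}_b}$, and $\mathcal{C}_{x_1}\subset \overline{\mathcal{C}_b}$ by $K$-$\sigma$-invariance of the closure.

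The hard part will be the surjectivity of $\Phi$: the explicit anchor deformation, its Newton polygon and Smith form, and the $K$-$\sigma$-invariance argument are all routine. What genuinely uses the $\GL_5$ specifics is identifying which $J_{b_2}(F)$-orbit each perturbation family $M_s$ lands in, i.e., the combinatorial matching between perturbations of $x_1$ and $X_\mu(b_2)$. Because $\dim X_\mu(b_2) = 1$, this matching should reduce to an explicit one-parameter check, but it is the essential step where general theory is replaced by hands-on computation in $\GL_5$.
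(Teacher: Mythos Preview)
Your connectedness argument in the first paragraph is the fatal gap. You claim that because $\mathbb{A}^1\setminus\{0\}$ is connected and central leaves are closed in $\mathcal N_{[b_2],\mu}$, the family $\{M(t):t\neq 0\}$ must lie in a single leaf. This inference is false: there are \emph{uncountably many} central leaves in $\mathcal N_{[b_2],\mu}$ (they are in bijection with $J_{b_2}(F)\backslash X_\mu(b_2)$, and $X_\mu(b_2)$ is one-dimensional), so each leaf is a proper closed subset of the Newton stratum and is \emph{not} open there. A connected curve can, and here does, meet infinitely many members of such a partition. Concretely, your one-parameter family $t\mapsto M(t)$ will typically trace out a one-parameter family of distinct leaves, just as the paper's representatives $x_t$ for varying $t$ lie in distinct leaves. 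Once this step fails, the conclusion $x_1\in\overline{\mathcal C_{M(1)}}$ no longer follows: knowing $M(0)=x_1$ only places $x_1$ in the closure of the \emph{union} $\bigcup_{t\neq 0}\mathcal C_{M(t)}$, not in the closure of any individual leaf. The same problem recurs in your second paragraph for each fixed $s$, so the surjectivity-of-$\Phi$ programme (which you in any case leave as a sketch) cannot be launched.

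The paper's proof runs in the opposite direction and avoids this trap. It first produces an explicit list of representatives $x_t$ exhausting all central leaves in $\mathcal N_{[b_2],\mu}$. Then, rather than deforming the fundamental alcove $b_1$ outward, it fixes one leaf $\mathcal C_{x_t}$ and builds a degeneration \emph{inside that leaf} towards $b_1$: over the valuation ring $R\subset\overline{k(\!(\pi)\!)}$ it writes down an explicit $g\in G(\overline{k(\!(\pi)\!)}[\![\epsilon]\!])$ with $g^{-1}b_2\sigma(g)=x_\pi$ (a $\pi$-deformation of $b_1$ lying in $\mathcal C_{b_2}$), computes the $\pi$-valuations of the relevant entries of $g$ and $g^{-1}$, and checks that $g^{-1}x_t\sigma(g)\in LG(R)$ specializes to $b_1$ at $\pi=0$. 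The point is that for every value of the generic parameter $\pi$ the element $g^{-1}x_t\sigma(g)$ stays in the \emph{same} leaf $\mathcal C_{x_t}$ by construction (it is $\sigma$-conjugate to $x_t$), so the specialization genuinely exhibits $b_1\in\overline{\mathcal C_{x_t}}$. That is exactly the control your approach lacks.
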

Recall that as $G$ is split, the central leaf $\mathcal C_{x}$ is uniquely determined by $[b_i]$. 

The proof of these lemmas is the content of the following two subsections.

\begin{remark}\label{remexpect}
Note that for $[b_i]$ with $i\leq 3$ one can show that $J_{b_i}(F)$ is acting transitively on the set of irreducible components of $X_{\mu}(b_i)$ and that each irreducible component is isomorphic to $\mathbb{P}^1$ in each of these cases.

Furthermore, the dimensions of the central leaves for $b_i,b_{i-1}$ differ by 1, so the closure of $\mathcal C_{b_i}$ contains a finite number of leaves of $[b_{i-1}]$. Naively, one might hope that these two observations are related in the sense that the closure of $\mathcal C_{b_i}$ contains exactly one leaf in $[b_{i-1}]$, and that in this way we obtain a bijection (or at least a finite correspondence) between the irreducible components of affine Deligne-Lusztig varieties. However, Theorem \ref{theorem_main} shows that this expectation is not correct. In fact quite the contrary is true in the sense that there is one leaf for $[b_{i-1}]$ (for the fundamental alcove) that is in the closure of \emph{every} leaf of $[b_i]$.
\end{remark}

\subsection{Proof of Lemma \ref{lemma2}}

Notice that in this case the two relevant Newton polygons have the slope $1/2$ (with multiplicity 2) in common. Thus as a preparation for the proof we will first consider the situation where $G=\GL_3$ and $\mu'=(1,0,0)$, $[b_2']$ the $\sigma$-conjugacy class of slope $1/3$ and $[b_3']$ of slopes $(1/2,1/2,0)$. Then $X_{\mu'}(b_i')$ is 0-dimensional (for $b_i'$ any representative of $[b_i']$ and $i=2,3$). By Lemma \ref{lem1} this implies that the Newton strata $\mathcal N_{\mu',[b_i']}$ consist of a single central leaf, and their closure is known. However, for later use we need more explicit information.   

Let $k'= \overline{k\dl \pi\dr }$ be an algebraic closure of $k\dl \pi\dr $ for a variable $\pi$. Computing Newton polygons we see that
 $$\left(\begin{array}{ccc}
 0& 1& 0 \\
 0& \pi& 1 \\
 \epsilon& 0& 0
 \end{array}\right)\text{ and }
 \left(\begin{array}{ccc}
 1& 0& 0 \\
 0& 0& 1 \\
 0& \epsilon& 0
 \end{array}\right)
 $$ are in the central leaf for $[b_3']$, and hence $G(k'\ll\epsilon\rr )$-$\sigma$-conjugate. In other words, there exists a $g\in G(k'\ll\epsilon\rr )$ such that
\begin{equation}\label{eq1}
\left(\begin{array}{ccc}
 0& 1& 0 \\
 0& \pi& 1 \\
 \epsilon& 0& 0
 \end{array}\right)\sigma(g)=
 g \left(\begin{array}{ccc}
 1& 0& 0 \\
 0& 0& 1 \\
 0& \epsilon& 0
 \end{array}\right).
\end{equation}
 Let us calculate $g$ explicitly. Let $g=\sum_{i=0}^\infty \epsilon^i(g_{jk}^i)_{1\leq j,k\leq 3}.$ Then \eqref{eq1} reads
 $$\sum_{i=0}^\infty \epsilon^i \left(\begin{array}{ccc}\sigma(g_{21}^i)& \sigma(g_{22}^i)& \sigma(g_{23}^i) \\
 \pi\sigma(g_{21}^i)+\sigma(g_{31}^i)& \pi\sigma(g_{22}^i)+\sigma(g_{32}^i)& \pi\sigma(g_{23}^i)+\sigma(g_{33}^i) \\
 \sigma(g_{11}^{i-1})&\sigma(g_{12}^{i-1})& \sigma(g_{13}^{i-1})
 \end{array}\right)=\sum_{i=0}^\infty \epsilon^i \left(\begin{array}{ccc}g_{11}^i& g_{13}^{i-1}& g_{12}^i \\
 g_{21}^i& g_{23}^{i-1}& g_{22}^i \\
 g_{31}^i&g_{33}^{i-1}& g_{32}^i
 \end{array}\right) $$ where we put $g^{-1}_{jk}=0$ for all $j,k$. We solve these equations by comparing the summands on the two sides and using induction on $i$. We will express all variables $g^i_{jk}$ in terms of those for $(j,k)=(2,1)$, $(1,3)$ and $(2,3)$ and then give the relations for those variables. In the following we refer to the equation we get from comparing the $(j,k)$th entries at step $i$ by $(jk^i)$. We obtain
\begin{align}
g^i_{11}={}&\sigma(g^i_{21})\tag{$11^i$}\\
g^i_{31}={}&\sigma(g_{11}^{i-1})=\sigma^2(g_{21}^{i-1}) \tag{$31^i$}\\
g^i_{21}={}&\pi\sigma(g^i_{21})+\sigma(g^i_{31})= \pi\sigma(g^i_{21})+\sigma^3(g_{21}^{i-1})\tag{$21^i$}\\
\sigma(g^i_{22})={}&g^{i-1}_{13}\tag{$12^i$}\\
\sigma(g^i_{32})={}&g^{i-1}_{23}-\pi \sigma(g^i_{22})=g^{i-1}_{23}-\pi g^{i-1}_{13}\tag{$22^i$}\\
g^i_{12}={}&\sigma(g^i_{23})\tag{$13^i$}\\
g^i_{33}={}&\sigma(g^i_{12})=\sigma^2(g_{23}^i)\tag{$32^{i+1}$}\\
g^i_{22}={}&\pi\sigma(g^i_{23})+\sigma(g^i_{33})= \pi\sigma(g_{23}^i)+\sigma^3(g_{23}^i)\tag{$23^i$}\\
g^i_{32}={}&\sigma(g^{i-1}_{13}).\tag{$33^i$}
\end{align} 
From ($12^{i}$) and ($23^{i}$) resp. from ($22^{i}$) and ($33^{i}$) we obtain 
\begin{align}
\sigma(\pi)\sigma^2(g_{23}^i)+\sigma^4(g_{23}^i)={}& g_{13}^{i-1}\tag{$*^i$}\\
\sigma^2(g_{13}^i)+\pi g_{13}^i={}&g_{23}^i.\tag{$**^i$}
\end{align}
Thus all of these equations can be solved: The variables $g_{23}^i$, $g_{13}^i$ satisfy equations ($*^i$) and ($**^i$), all others satisfy one of the recursive equations ($jk^i$) above. For example, for $i=0$ we obtain
$$ g^0=(g^0_{jk})=\left(\begin{array}{ccc}
 \sigma(g^0_{21})& \sigma(g_{23}^0)& g_{13}^0 \\
 g^0_{21}& 0& g_{23}^0 \\
 0& 0& \sigma^2(g_{23}^0)
 \end{array}\right).$$ Here $g_{23}^0$ is a solution of $\pi\sigma(g_{23}^0)+ \sigma^3(g_{23}^0)=0$. We choose it to be non-zero, as this condition is needed for $g^0$ to be invertible. In other words, $g_{23}^0$ is a $q^3-q$th root of $-\pi$. Further, $g_{21}^0$ is a non-zero solution of ($21^0$), i.e.~a $q-1$st root of $1/\pi$. Finally, $g_{13}^0$ satisfies ($**^0$). One can then easily check that $g^0$ is invertible. In particular, the element $g$ defined in this way is indeed in $G(k'\ll\epsilon\rr ).$ 

Consider on $\overline{k\dl \pi\dr }$ the valuation defined by $\pi$, and let $R$ be the valuation ring. Using induction and ($*^i$) and ($**^i$), one obtains $v(g_{23}^i)=\frac{1}{q^{6i+1}(q^2-1)}$ and $v(g_{13}^i)=\frac{1}{q^{6i+3}(q^2-1)}$.

We now return to the situation where $G=\GL_5$. 

\begin{lemma}\label{lemrep1}
Every central leaf in $\mathcal N_{[b_3],\mu}$ has a representative of the form $$x_t=\left(\begin{array}{ccccc}
 1& 0& 0& 0& 0\\
 0& 0& 1& t& 0\\
 0& \epsilon& 0& 0& 0\\
 0& 0& 0& 0& 1\\
 0& 0& 0& \epsilon& 0
 \end{array}\right).$$
\end{lemma}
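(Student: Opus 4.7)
The plan is to reduce $b$ to the form $x_t$ in three stages: first peel off the slope-zero piece of the isocrystal, then normalize the slope-$(1/2)^{(4)}$ part to a standard block upper-triangular form, and finally gauge away the excess entries of the off-diagonal block to leave the single parameter $t$.

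First I would use the canonical $b\sigma$-stable decomposition $L^5 = V_{1/2} \oplus V_0$ coming from $\nu_3 = \big((1/2)^{(4)}, 0\big)$, with $V_0$ one-dimensional. An element of $K$ aligns $V_0 \cap \mathcal{O}_L^5 = \mathcal{O}_L e_1$ and $V_{1/2} \cap \mathcal{O}_L^5 = \mathrm{span}(e_2, \ldots, e_5)$, and Lang's theorem for $\mathcal{O}_L^\times$ normalizes the slope-$0$ eigenvalue to $1$. Since $be_i \in V_{1/2} \cap \mathcal{O}_L^5$ for $i \geq 2$, this brings $b$ to block-diagonal form $\mathrm{diag}(1, B)$ with $B \in \GL_4(L)$ of Newton slopes $(1/2)^{(4)}$ and bounded by $\mu' = (1,1,0,0)$.

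Next I would reduce $B$ by choosing any $B\sigma$-stable $L$-subspace $W \subset L^4$ of dimension $2$, which exists abundantly by the semi-simplicity of the rational isocrystal $N_{1/2}^{\oplus 2}$. The saturated sublattice $W \cap \mathcal{O}_L^4$ is $B\sigma$-stable (since $B\mathcal{O}_L^4 \subseteq \mathcal{O}_L^4$ by $\mu'$-boundedness), and both graded pieces $W$ and $L^4/W$, equipped with the induced lattices, give slope-$(1/2, 1/2)$ isocrystals bounded by $(1, 0)$ in $\GL_2$. Since $\dim X_{(1,0)}(b'') = 0$ for such data, Lemma \ref{lem1} applies and each graded piece is $\GL_2(\mathcal{O}_L)$-$\sigma$-conjugate to $A_0 = \begin{pmatrix} 0 & 1 \\ \epsilon & 0 \end{pmatrix}$. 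Lifting these bases to $\mathcal{O}_L^4$ brings $B$ to the form $\begin{pmatrix} A_0 & X \\ 0 & A_0 \end{pmatrix}$ for some $X \in \mathcal{O}_L^{2 \times 2}$; the $\mu'$-boundedness forces the $(2,1)$-entry of $X$ to lie in $\epsilon \mathcal{O}_L$.

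The main obstacle is the final normalization of $X$. Conjugating by the unipotent $\begin{pmatrix} I & Y \\ 0 & I \end{pmatrix}$ preserves the block form and sends $X \mapsto X + A_0\sigma(Y) - Y A_0$. Writing this out entry by entry for $Y = \begin{pmatrix} a & b \\ c & d \end{pmatrix}$, one finds that the $(1,2)$, $(2,1)$, and $(2,2)$ entries of $X$ can be killed by solving an associated $\sigma$-linear system over $\mathcal{O}_L$; the key solvability step reduces to an equation of the form $(1 - \sigma^2)a = \text{given data}$, which has a solution in $\mathcal{O}_L$ by Lang's theorem (here one uses that the forced divisibility of the $(2,1)$-entry by $\epsilon$ makes the equation $\epsilon d - \epsilon\sigma(a) = -x_3$ solvable). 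The residual freedom in $Y$ adjusts only the $(1,1)$-entry of $X$, which becomes the desired $t$. This explicit $\sigma$-linear computation, which parallels in spirit the $\GL_3$ calculation carried out earlier in the paper, ties together the previous structural reductions and shows that every central leaf in $\mathcal{N}_{[b_3],\mu}$ has a representative of the form $x_t$.
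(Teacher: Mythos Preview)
Your argument is correct and follows the same two-step outline as the paper: first use the Hodge--Newton decomposition (which you invoke implicitly when asserting that the lattice splits along the rational slope decomposition --- you should name it, since $(V_0\cap\mathcal{O}_L^5)\oplus(V_{1/2}\cap\mathcal{O}_L^5)=\mathcal{O}_L^5$ is not automatic without it) to reduce to the Levi $\mathbb{G}_m\times\GL_4$, and then normalize inside $\GL_4$ by an explicit $\sigma$-linear calculation. The paper itself gives essentially no details for the second step beyond citing analogous computations in \cite{Kaiser} and \cite{modpdiv}, so your block-triangular reduction via a chosen rank-$2$ sub-isocrystal, Lemma~\ref{lem1} on each $\GL_2$ block, and the final unipotent gauge on $X$ is a perfectly good way to carry out what the paper leaves to the reader.
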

\begin{proof}
We have to show that every $x\in X_{\mu}(b_3)$ has a representative $g$ such that $g^{-1}b_3\sigma(g)$ is of the above form. By the Hodge-Newton decomposition \cite{Katz}, one first shows that there always is a representative with $g$ in the Levi subgroup $\mathbb G_m\times \GL_4$. The remaining argument is a standard calculation, for example along the same lines as in \cite{Kaiser}, 3 where the analogous case for $G=\GSp_4$ is considered, or in \cite{modpdiv}, where a generic subscheme of any affine Deligne-Lusztig variety for $\GL_n$ and minuscule $\mu$ is described. We therefore leave the details to the reader.
\end{proof}
\begin{remark}
A representative of the central leaf of a fundamental alcove of $[b_2]$ is given by the matrix
$$b_2:=\left(\begin{array}{ccccc}
 0& 1& 0& 0& 0\\
 0& 0& 1& 0& 0\\
 \epsilon& 0& 0& 0& 0\\
 0& 0& 0& 0& 1\\
 0& 0& 0& \epsilon& 0
 \end{array}\right),$$ as can be seen from an elementary calculation or using \cite{foliat}, Example 4.4.
\end{remark}

 \begin{proof}[Proof of Lemma \ref{lemma2}]
By the above lemma and remark it remains to show that for every $t$, the element $b_2$ is contained in the closure of $\mathcal{C}_{x_t}$.

We use the element $g\in \GL_3(k'\ll\epsilon\rr )$ constructed above, and compute
$$ \left(\begin{array}{ccc}
g& & \\
& 1& 0\\
& 0& 1
\end{array}\right)
\left(\begin{array}{ccccc}
1& 0& 0& 0& 0\\
0& 0& 1& t& 0\\
0& \epsilon& 0& 0& 0\\
0& 0& 0& 0& 1\\
0& 0& 0& \epsilon& 0
\end{array}\right)
\left(\begin{array}{ccc}
\sigma(g^{-1})& & \\
& 1& 0\\
& 0& 1
 \end{array}\right)=\left(\begin{array}{ccccc}
 0& 1& 0& t_1& 0\\
 0& \pi& 1& t_2& 0\\
 \epsilon& 0& 0& t_3& 0\\
 0& 0& 0& 0& 1\\
 0& 0& 0& \epsilon& 0
 \end{array}\right).$$
 Here
 $$\left(\begin{array}{c}
 t_1 \\
 t_2\\
 t_3
 \end{array}\right)=g\left(\begin{array}{c}
 0 \\
 t\\
 0
 \end{array}\right)=t\sum_{i\geq 0}\epsilon^i\left(\begin{array}{c}
 g_{12}^i \\
 g_{22}^{i}\\
 g_{32}^{i} \end{array}\right)
 =t\sum_{i\geq 0}\epsilon^i\left(\begin{array}{c}
 (g_{23}^i)^q\\
 (g_{13}^{i-1})^\frac{1}{q}\\
 (g_{13}^{i-1})^q
 \end{array}\right).
 $$
 As $v(g_{13}^i), v(g_{23}^i)>0$, this implies that
 $$\left(\begin{array}{ccccc}
 0& 1& 0& t_1& 0\\
 0& \pi& 1& t_2& 0\\
 \epsilon& 0& 0& t_3& 0\\
 0& 0& 0& 0& 1\\
 0& 0& 0& \epsilon& 0
 \end{array}\right)\in LG(R)
 $$
is an $R$-valued point of $\mathcal C_{x_t}$. Note that we could even consider $t$ as a variable, and in this way produce a point in $LG(R[t])$. Putting $\pi=0$ we get $b_2\in \overline{\mathcal C_{x_t}}.$
 \end{proof}

\subsection{Proof of Lemma \ref{lemma3}}

To prove Lemma \ref{lemma3} we proceed in a similar way as in the preceeding subsection. We choose representatives
$$b_2=\left(\begin{array}{ccccc}
 0& 1& 0& 0& 0\\
 0& 0& 1& 0& 0\\
 \epsilon& 0& 0& 0& 0\\
 0& 0& 0& 0& 1\\
 0& 0& 0& \epsilon& 0
 \end{array}\right),\quad b_1=\left(\begin{array}{ccccc}
 0& 0& 1& 0& 0\\
 0& 0& 0& 1& 0\\
 0& 0& 0& 0& 1\\
 \epsilon& 0& 0& 0& 0\\
 0& \epsilon& 0& 0&0
 \end{array}\right).$$
Then $b_1, b_2$ are representatives of the central leaf of the fundamental alcove for $[b_1]$ resp. of $[b_2]$.

\begin{lemma} For $k'=\overline{k\dl \pi\dr }$ as above we have
$$x_{\pi}=\left(\begin{array}{ccccc}
 0& 0& 1& 0& 0\\
 0& 0& 0& 1& 0\\
 0& 0& 0& \pi& 1\\
 \epsilon& 0& 0& 0& 0\\
 0& \epsilon& 0& 0& 0
 \end{array}\right)\in \mathcal C_{b_2}.$$
\end{lemma}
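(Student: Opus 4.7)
The plan is to exhibit an explicit element $g \in K(k') = \GL_5(k'\ll\epsilon\rr)$ satisfying $b_2\sigma(g)=g x_\pi$, so that $x_\pi = g^{-1} b_2 \sigma(g)$ lies in the $K$-$\sigma$-conjugacy class of $b_2$. This mirrors the strategy of the $\GL_3$ computation in the proof of Lemma \ref{lemma2}.

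Write $g=\sum_{i\ge 0}\epsilon^i g^i$ with $g^i=(g^i_{jk})$ a $5\times 5$ matrix over $k'$. Unfolding the identity $b_2\sigma(g)=g x_\pi$ entry by entry and expanding in powers of $\epsilon$ yields a system of $25$ scalar equations at each level $i$. Left multiplication by $b_2$ permutes the rows of $\sigma(g)$ in two cycles: rows $\{1,2,3\}$ with an $\epsilon$-shift from row $3$ back to row $1$, and rows $\{4,5\}$ with an $\epsilon$-shift from row $5$ back to row $4$. Right multiplication by $x_\pi$ permutes the columns of $g$ similarly (with an $\epsilon$-shift between columns $\{3,4,5\}$ and columns $\{1,2\}$), with the crucial new feature that it mixes column $2$ with column $3$ through the entry $\pi$ at position $(3,4)$. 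The equations arising from columns $3$ and $5$ of the identity are pure Frobenius shifts and allow one to express columns $1$ and $3$ of $g^i$ in terms of columns $2$ and $5$; the equations from columns $1$ and $2$ likewise pin down columns $4$ and $5$ of $g^{i-1}$ from preceding data. The only equations that involve $\pi$ are those obtained from column $4$.

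Substituting these Frobenius-shift relations into the column-$4$ equations leaves a $\sigma$-linear system for a small set of free entries of $g^0$, with coefficients in $k[\pi]$, precisely analogous to the equations $(*^i)$ and $(**^i)$ that appeared in the $\GL_3$ calculation. Non-zero solutions exist over the algebraically closed field $k' = \overline{k\dl\pi\dr}$ by extracting appropriate $\sigma$-roots (Artin--Schreier style, like the $(q^3-q)$-th root of $-\pi$ produced before). A direct check then shows that a consistent choice of roots makes the constant term $g^0$ invertible, so $g \in K(k')$; evaluating $g^{-1}b_2\sigma(g)$ recovers $x_\pi$, so $x_\pi \in \mathcal C_{b_2}$.

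The main obstacle is the bookkeeping: identifying a consistent basis of free parameters for $g^0$, solving the level-$0$ constraints with compatible root choices to ensure $\det g^0 \neq 0$, and checking that the recursion closes at all higher levels $i\ge 1$. The ideas are entirely parallel to the $\GL_3$ case, only the number of simultaneous recursions and the degree of the polynomial relations in $\pi$ are larger; as in the earlier computation one expects the leading valuations of the entries of $g^0$ to be positive fractional powers of $\pi$, which is what enables the degeneration $\pi \to 0$ used afterwards to deduce that $b_1 \in \overline{\mathcal C_{b_2}}$.
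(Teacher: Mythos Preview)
Your approach is correct in principle and is, in fact, exactly the computation the paper carries out immediately \emph{after} the lemma, since the explicit conjugator $g$ and the $\pi$-adic valuations of its entries are needed for the degeneration argument in the proof of Lemma~\ref{lemma3}. So nothing you propose is wasted.

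However, the paper does \emph{not} prove the lemma this way. Instead it invokes the theory of truncations of level~1 from \cite{trunc1}: since $b_2$ represents the central leaf of a fundamental alcove, that central leaf coincides with the truncation-of-level-1 stratum of $b_2$; one then checks, via the algorithm in the proof of \cite{trunc1}, Theorem~1.1, that $x_\pi$ has the same level-1 truncation as $b_2$, hence lies in the same central leaf. This argument is shorter and more conceptual: it replaces the explicit construction of $g$ (and the verification that $g^0$ is invertible, which you correctly flag as the nontrivial step) by a finite combinatorial check. The trade-off is that it relies on the general machinery of fundamental alcoves and truncation strata, whereas your route is self-contained but requires carrying out the full recursive solution and the determinant computation for $g^0$ by hand. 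Since the paper needs the explicit $g$ anyway for the valuation estimates, one could argue your approach is more economical overall; the paper separates the two because the existence statement (the lemma) and the quantitative information about $g$ serve different purposes in the sequel.
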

\begin{proof}
The element $b_2$ is $K$-$\sigma$-conjugate to the fundamental alcove of $[b_2]$. In particular, its truncation of level 1 is equal to the truncation of level 1 of that element. The definition of fundamental alcoves $y$ implies that the set of elements in the truncation stratum of $y$ agrees with the central leaf of $y$.  

For $x_{\pi}$ a direct calculation using the algorithm in the proof of \cite{trunc1}, Theorem 1.1 shows that it has the same truncation of level 1 as $b_2$. Hence also the corresponding central leaves agree.
\end{proof}
We need again some information about the explicit element $g\in G(k'\ll \epsilon\rr )$ with $g^{-1}b_2\sigma(g)=x_{\pi}$. Let $g=\sum\limits_{i=0}^\infty \epsilon^i(g_{jk}^i)_{1\leq j,k\leq 5}.$ Then 
$$b_2\sigma(g)=gx_{\pi}$$ is equivalent to
$$\left(\begin{array}{lllll}
 \sigma(g_{21}^i)& \sigma(g_{22}^i)& \sigma(g_{23}^i)& \sigma(g_{24}^i)& \sigma(g_{25}^i)\\
 \sigma(g_{31}^i)& \sigma(g_{32}^i)& \sigma(g_{33}^i)& \sigma(g_{34}^i) &\sigma(g_{35}^i)\\
 \sigma(g_{11}^{i-1})& \sigma(g_{12}^{i-1})& \sigma(g_{13}^{i-1})& \sigma(g_{14}^{i-1})& \sigma(g_{15}^{i-1})\\
 \sigma(g_{51}^i)& \sigma(g_{52}^i)& \sigma(g_{53}^i)& \sigma(g_{54}^i)& \sigma(g_{55}^i)\\
 \sigma(g_{41}^{i-1})& \sigma(g_{42}^{i-1})& \sigma(g_{43}^{i-1})& \sigma(g_{44}^{i-1})& \sigma(g_{45}^{i-1})
 \end{array}\right)=\left(\begin{array}{ccccc}
 g_{14}^{i-1}& g_{15}^{i-1}& g_{11}^i& g_{12}^i+\pi g_{13}^i& g_{13}^i\\
 g_{24}^{i-1}& g_{25}^{i-1}& g_{21}^i& g_{22}^i+\pi g_{23}^i& g_{23}^i\\
 g_{34}^{i-1}& g_{35}^{i-1}& g_{31}^i& g_{32}^i+\pi g_{33}^i& g_{33}^i\\
 g_{44}^{i-1}& g_{45}^{i-1}& g_{41}^i& g_{42}^i+\pi g_{43}^i& g_{43}^i\\
 g_{54}^{i-1}& g_{55}^{i-1}& g_{51}^i& g_{52}^i+\pi g_{53}^i& g_{53}^i
 \end{array}\right)$$ for every $i\geq 0$, again using the convention that entries with negative upper indices are $0$. Comparing coefficients and expressing everything in terms of the second column we get
$$(g^i_{jk})_{j,k}=\left(\begin{array}{lllll}
 {\sigma^3}(g_{12}^i)& g_{12}^i&{\sigma^2} (g_{32}^{i+1})& {\sigma^4}(g_{22}^{i+1})& \sigma(g_{22}^{i+1})\\
 {\sigma^3}(g_{22}^i)& g_{22}^i&{\sigma^2} (g_{12}^i)& {\sigma^4}(g_{32}^{i+1})& \sigma(g_{32}^{i+1})\\
 {\sigma^3}(g_{32}^i)& g_{32}^i&{\sigma^2} (g_{22}^i)& {\sigma^4}(g_{12}^i)& \sigma(g_{12}^i)\\
 {\sigma^3}(g_{52}^i)& g_{42}^i& {\sigma^2}(g_{42}^i)& {\sigma^4}(g_{42}^i)& \sigma(g_{52}^{i+1})\\
 {\sigma^3}(g_{42}^{i-1})& g_{52}^i& {\sigma^2}(g_{52}^i)& {\sigma^4}(g_{52}^i)& \sigma(g_{42}^i)
 \end{array}\right)$$
 satisfying
 \begin{align}
\label{eq2} {\sigma^5} (g_{12}^i)={}& g_{22}^i+\pi\sigma^2(g_{12}^i)\\
 {\sigma^5}(g_{22}^i)={}& g_{32}^i+\pi\sigma^2(g_{22}^i)\\ 
\label{eq4}{\sigma^5}(g_{32}^{i+1})={}& g_{12}^i+\pi\sigma^2(g_{32}^{i+1})\\
\nonumber{\sigma^5}(g_{42}^i)={}& g_{52}^{i+1}+\pi\sigma^2(g_{52}^{i+1})\\ 
\nonumber {\sigma^5}(g_{52}^i)={}&g_{42}^i+\pi{\sigma^2}(g_{42}^i).
 \end{align}
We consider again on $k'=\overline{k\dl \pi\dr }$ the valuation defined by $\pi$. Using induction and \eqref{eq2}--\eqref{eq4} we get $v(g_{12}^i)=\frac{1}{q^{15i+12}(q^3-1)}, v(g_{22}^i)=\frac{1}{q^{15i+7}(q^3-1)}, v(g_{32}^i)=\frac{1}{q^{15i+2}(q^3-1)}.$ 

Using an analogous calculation one can compute $g^{-1}$. We are mainly interested in the last two columns. Put $g^{-1}=\sum_{i=0}^\infty \epsilon^i(h_{jk}^i)_{1\leq j,k\leq 5}$. Then one obtains
 $$g^{-1}=\sum_{i=0}^\infty\epsilon^i\left(\begin{array}{ccccc} 
 *& *& *& \sigma(h_{35}^i) & \sigma(h_{34}^{i+1})\\
 *& *& *& \sigma^3(h_{35}^i) & \sigma^3(h_{34}^{i+1})\\
 *& *& *& h_{34}^i & h_{35}^i\\
 *& *& *& \sigma^2(h_{34}^i) & \sigma^2(h_{35}^i)\\
 *& *& *& \sigma^4(h_{34}^i) & \sigma^4(h_{35}^i)\\
 \end{array}\right)$$
 satisfying
\begin{align*}
h_{34}^i={}&\pi\sigma(h_{45}^i)+\sigma(h_{55}^i)=\pi \sigma^3(h_{35}^i)+\sigma^5(h_{35}^i)\\
h_{35}^{i-1}={}&\pi\sigma(h_{44}^i)+\sigma(h_{54}^i)=\pi \sigma^3(h_{34}^i)+\sigma^5(h_{34}^i).
\end{align*}
Using induction and these two relations one obtains  $v(h_{34}^i)=\frac{1}{q^{10i}(q^5-q^3)}, v(h_{35}^i)=\frac{1}{q^{10i+5}(q^5-q^3)}.$

\begin{lemma}
Every central leaf in $\mathcal N_{[b_2],\mu}$ has a representative of the form $$x_t=\left(\begin{array}{ccccc}
 0& 1& 0& 0& 0\\
 0& 0& 1& 0& 0\\
 \epsilon& 0& 0& 0& 0\\
 t& 0& 0& 0& 1\\
 0& 0& 0& \epsilon& 0
 \end{array}\right).$$
\end{lemma}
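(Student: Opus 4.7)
The plan is to mirror the proof of Lemma \ref{lemrep1}, with one essential change: the Hodge--Newton decomposition of \cite{Katz} is not available here, because the internal break of the Newton polygon $\nu_2 = (1/2,1/2,1/3,1/3,1/3)$ sits at height one while the Hodge polygon of $\mu = (1,1,0,0,0)$ passes through height two at the same horizontal position. I will use the slope filtration instead.

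Given $b\in \mathcal{N}_{[b_2],\mu}$, consider the Dieudonn\'e module $(M = \mathcal{O}_L^5, \phi = b\sigma)$. Its isocrystal splits into a rank-$2$ piece of slope $1/2$ and a rank-$3$ piece of slope $1/3$, and the slope filtration yields a canonical $\phi$-stable sub-lattice $M_{1/2}\subset M$ of rank $2$, with $M/M_{1/2}$ of rank $3$ and slope $1/3$. Choosing an $\mathcal{O}_L$-basis of $M$ adapted to this filtration, with $M_{1/2}$ spanned by the last two basis vectors, puts $b$ in block lower-triangular form $b = \bigl(\begin{smallmatrix} A & 0 \\ C & B \end{smallmatrix}\bigr)$ with $A\in\GL_3(L)$, $B\in\GL_2(L)$, and $C$ a $2\times 3$ block. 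The requirement that $b$ has Hodge type $\mu$ forces the block Hodge types to be $(1,0,0)$ for $A$ and $(1,0)$ for $B$, so that each of $A$ and $B$ is $\mu'$-ordinary in its respective group. By Lemma \ref{lem1} applied to $\GL_3$ and $\GL_2$ individually, the corresponding Newton strata consist of single central leaves; hence I may $K$-$\sigma$-conjugate by an element of the Levi $\GL_3\times\GL_2$ to bring $A$ into the upper-left $3\times 3$ block of $b_2$ and $B$ into the lower-right $2\times 2$ block of $b_2$ simultaneously, preserving the block lower-triangular shape.

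It then remains to normalize the cross term $C$. Conjugating by the unipotent element $g = \bigl(\begin{smallmatrix} 1 & 0 \\ X & 1 \end{smallmatrix}\bigr)$ with $X\in M_{2\times 3}(\mathcal{O}_L)$ replaces $C$ by $C + B\sigma(X) - XA$, so that the possible $C$'s, up to the remaining $K$-$\sigma$-conjugation, are classified by the cokernel of the $\sigma$-semilinear operator $L_{A,B}\colon X\mapsto XA - B\sigma(X)$ on $M_{2\times 3}(\mathcal{O}_L)$. The main obstacle is an explicit verification that, for the normalized $A$ and $B$, this cokernel is one-dimensional and is represented by the matrix with a single nonzero entry at position $(1,1)$ of $C$, corresponding to entry $(4,1)$ of the $5\times 5$ matrix; this is an entry-by-entry computation tracking $\epsilon$-adic valuations, in the spirit of the cross-term analyses in \cite{Kaiser}, Section 3 and in \cite{modpdiv}, and the outcome is consistent with the count $\dim X_\mu(b_2) = 1$ from \cite{GHKR},\cite{dimdlv}. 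A final adjustment by higher-order unipotent $K$-$\sigma$-conjugation lets us take $t\in k$, yielding the normal form $x_t$.
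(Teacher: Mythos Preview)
Your approach is correct and follows the same outline the paper intends (its own proof just refers back to Lemma \ref{lemrep1} and omits the details): reduce to block-triangular form via a filtration, normalize the diagonal blocks using Lemma \ref{lem1}, then clear the off-diagonal block by the explicit $\sigma$-linear computation. Two terminological slips are worth fixing. First, the blocks $A$ and $B$ are \emph{basic}, not $\mu'$-ordinary: their Newton points $(1/3)^{(3)}$ and $(1/2)^{(2)}$ are the minimal, not the maximal, elements of $B(\GL_3,(1,0,0))$ and $B(\GL_2,(1,0))$. This does not affect the argument, since what you actually use is that the corresponding affine Deligne--Lusztig varieties are $0$-dimensional, which is true and makes Lemma \ref{lem1} applicable. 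Second, Katz's slope filtration has \emph{increasing} slopes, so its first step is the slope-$1/3$ sublattice, not $M_{1/2}$. The filtration you want still exists---take $M_{1/2}:=M\cap V_{1/2}$, which is saturated in $M$ and $\phi$-stable because $\phi(M)\subset M$ and $\phi(V_{1/2})=V_{1/2}$, with quotient free of rank $3$ and isoclinic of slope $1/3$---so your reduction to block lower-triangular form goes through; just do not call this ``the slope filtration''.
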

\begin{proof}
We have to show that every $x\in X_{\mu}(b_2)$ has a representative $g$ such that $g^{-1}b_2\sigma(g)$ is of the above form. As for Lemma \ref{lemrep1} this is a standard calculation (and very similar to the one in that lemma). We therefore omit the details. 
\end{proof} 
\begin{proof}[Proof of Lemma \ref{lemma3}]
By the previous lemma it is enough to show that $b_1$ is contained in the closure of the central leaf $\mathcal C_{x_t}$ for any given $t$. 

 We compute
 $$g^{-1}x_t\sigma(g)=\left(\begin{array}{ccccc}
 0& 0& 1& 0& 0\\
 0& 0& 0& 1& 0\\
 0& 0& 0& \pi& 1\\
 \epsilon& 0& 0& 0& 0\\
 0& \epsilon& 0& 0& 0
 \end{array}\right)+g^{-1}\left(\begin{array}{ccccc}
 0& 0& 0& 0& 0\\
 0& 0& 0& 0& 0\\
 0& 0& 0& 0& 0\\
 t& 0& 0& 0& 0\\
 0& 0& 0& 0& 0
 \end{array}\right)\sigma(g).$$
The entries of the second summand are products of $t$ with the entries of the fourth column of $g^{-1}$ and the first row of $\sigma(g)$. All of these entries are in $R\ll \epsilon\rr $ (compare the calculations of the corresponding valuations above). Hence this summand, and thus also the whole right hand side, are indeed in $LG(R)$. Setting $\pi=0$ we get $b_1\in \overline{\mathcal C_{x_t}}.$ Note that again, it would have been possible to do the same construction with a family parametrized by the variable $t$ instead of with a fixed $t$.
\end{proof}

\noindent{\it Acknowledgments.} We thank P.~Hamacher for helpful comments on a previous version. The second author thanks S.~Neupert and A.~Ivanov for helping him, both mathematically and with daily issues during his stay in Germany.

\end{document}